\newcommand{\bigot}{\mathop{\large \mathlarger{\mathlarger{\bot}}}}
\newcommand{\shrinkmargins}[1]{
  \addtolength{\textheight}{#1\topmargin}
  \addtolength{\textheight}{#1\topmargin}
  \addtolength{\textwidth}{#1\oddsidemargin}
  \addtolength{\textwidth}{#1\evensidemargin}
  \addtolength{\topmargin}{-#1\topmargin}
  \addtolength{\oddsidemargin}{-#1\oddsidemargin}
 \addtolength{\evensidemargin}{-#1\evensidemargin}
  }
\theoremstyle{plain}
\newtheorem{theorem}{Theorem}[section]
\newtheorem{corollary}[theorem]{Corollary}
\newtheorem{lemma}[theorem]{Lemma}
\newtheorem{proposition}[theorem]{Proposition}
\newtheorem{question}[theorem]{Question}
\newtheorem*{teo}{Theorem}
\newtheorem*{coro}{Corollary}
\newtheorem{definition}[theorem]{Definition}
\theoremstyle{remark}
\newtheorem{remark}[theorem]{Remark}
\theoremstyle{definition}
\newtheorem{example}[theorem]{Example}
\def \Z { \mathbb{Z}}
\def \Q { \mathbb{Q}}
\def \C { \mathbb{C}}
\def \R { \mathbb{R}}
\def \tr { {\rm tr}}
\begin{document}

\thispagestyle{empty}
\setcounter{tocdepth}{7}

\title{The integral trace form as a complete invariant for real  $S_n$ number fields.}
\author{Guillermo Mantilla-Soler \and Carlos Rivera}

\date{}

\maketitle

\begin{abstract}

In the past the first named author has studied to what extent the integral trace can characterize a number field beyond what the discriminant does. The cases of cyclic number fields and non-totally real fields are more or less settled, concluding that for such fields the integral trace does not always characterize the field. In this paper we show that the integral trace is a complete invariant for degree $n$, $S_n$ real number number fields that satisfy certain ramification bound. Among the real $S_n$ fields that our results cover, there are those of square free different ideal. Moreover, for such fields we find an explicit description of the isometry group of the integral trace.
\end{abstract}

\section{ Introduction}
 
In the first part of the introduction we briefly present some of the results of the paper. 
In the second part of the introduction we give a detailed description as to how the ideas, statements and definitions behind the main results came to be. 
 
\subsection{Main results}

Given a number field $K$ we denote by $(O_K,{\rm tr}_{K/\mathbb{Q}})$ the pair given by the free $\Z$-module $O_{K}$ together with the symmetric bilinear pairing given by the trace \begin{displaymath}
\begin{array}{cccc}
 \mathrm{tr}_{K/\Q}: & O_{K} \times O_{K} &\rightarrow& \Z  \\  & (x,y) & \mapsto &
\mathrm{Tr}_{K/\Q}(xy).
\end{array}
\end{displaymath}

Some of the results we proved in this paper are stated next.

\begin{teo}[cf.Theorem \ref{ElGeneralisimo}]
Let $K$ be a degree $n$ totally real $S_{n}$ number field and $L$ be any $S_n$ number field. Let $d_{s}(K,L)$ be the integer defined in \ref{LosEnterosQuePartenDisc}.
\begin{enumerate}[(i)]
    \item Suppose that $n \ge d_{s}(K,L)^{2}$. Then, \[(O_K,{\rm tr}_{K/\mathbb{Q}}) \cong  (O_L,{\rm tr}_{L/\mathbb{Q}}) \ \mbox{if and only if} \ K \cong L.\]
    
    \item If  $K$ does not have wild ramification, then the condition $n \ge d_{s}(K,L)^{2}$ can be improved to $n \ge {\rm rad}(d_{s}(K,L))^{2}$.
    
    \item  If  $K$ and $L$ do not have wild ramification except possibly for some primes with ramification index $2$ lying over $2$, then the condition $n \ge d_{s}(K,L)^{2}$ can also be improved to $n \ge {\rm rad}(d_{s}(K,L))^{2}$ .
\end{enumerate}
\end{teo}

\begin{definition}\label{LosEnterosQuePartenDisc}

Let $K$ be a number field and let $d_{K}$ be its discriminant. For $p$ a rational prime we let $e_{p}(K)$ be the maximal ramification index of a prime in $K$ lying over $p$. For a number field $L$ we define $d_{f}(K,L)$ as the product  $\prod p^{v_p(d_K)}$ where $p$ runs over the set of odd primes such that $e_p(K)= 2=e_p(L)$, and set  $d_{s}(K,L):=d_K/d_f(K,L) \in \Z$.

\end{definition}

\begin{remark}
Notice that if $K$ and $L$ have the same discriminant $d$, then $d_s(K,L)$ is bounded by the non-square free part of $d$, i.e., $\displaystyle d_s(K,L) \leq \prod_{p, v_p(d) >  1} p^{v_p(d)}$.
\end{remark}

 As a consequence of the above we have that the integral trace is a complete invariant for $S_n$ real fields with at worst quadratic ramification.

 \begin{teo}[cf.Theorem \ref{TotallyRealFundDiscG}]
Let $K,L$ be $S_n$ number fields. Suppose that $K$ is totally real and that the ramification index of every prime in $K$ and $L$ over $\Q$ is at most $2$. Then, $$(O_K,{\rm tr}_{K/\mathbb{Q}}) \cong  (O_L,{\rm tr}_{L/\mathbb{Q}}) $$ if and only if $K \cong L$.
\end{teo}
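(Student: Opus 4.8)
The plan is to obtain this as a consequence of Theorem~\ref{ElGeneralisimo}. Suppose $(O_K,\mathrm{tr}_{K/\Q}) \cong (O_L,\mathrm{tr}_{L/\Q})$. Since the determinant of a Gram matrix of the integral trace form is the field discriminant, $d_L = d_K$ is again a fundamental discriminant; since the signature of $\mathrm{tr}_{K/\Q}$ over $\R$ is $(r_1+r_2,r_2)$, the positive-definiteness coming from $K$ being totally real forces $L$ to be totally real; and since the rank of the form is the degree, $[L:\Q]=n$. Hence it suffices to prove: (a) every totally real number field with fundamental discriminant is an $S_n$-field, and (b) the numerical hypothesis of Theorem~\ref{ElGeneralisimo} is automatically satisfied in this setting. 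Granting these, applying Theorem~\ref{ElGeneralisimo} to the pair $K,L$ yields $K \cong L$, and the converse implication is trivial.

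For (a), write $G=\mathrm{Gal}(\widetilde{K}/\Q) \le S_n$ for the Galois group of the Galois closure, acting on the $n$ embeddings of $K$. The crucial point is that a fundamental discriminant forces minimal ramification: for an odd prime $p \mid d_K$ one has $v_p(d_K)=1$, and since $v_p(d_K)=\sum_{\mathfrak q \mid p} f_\mathfrak q\, d(\mathfrak q/p)$ with $d(\mathfrak q/p)\ge e_\mathfrak q-1$, there is a unique prime of $K$ above $p$, it is tamely ramified with $e=2$ and $f=1$, and all the others are unramified. Consequently the inertia subgroup at a prime of $\widetilde{K}$ over $p$ has a single orbit of length two on the $n$ embeddings and fixes the remaining ones, i.e.\ it is generated by a transposition; the prime $p=2$ is handled in the same spirit, the fundamental-discriminant condition pinning down the admissible $2$-adic ramification type. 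Because $\Q$ has, by Hermite--Minkowski, no nontrivial extension unramified at all finite primes, $G$ is generated by its inertia subgroups and hence by transpositions; as $K$ is a field, $G$ is transitive; and a transitive subgroup of $S_n$ generated by transpositions is all of $S_n$. The identical argument applies to $L$.

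For (b), the same minimal-ramification analysis shows that the largest divisor $d_s$ of $d_K$ with $d_K/d_s$ squarefree and coprime to $d_s$ is supported only at the prime $2$, so $\mathrm{rad}(d_s)\le 2$ and the improved bound $n \ge \mathrm{rad}(d_s)^2$ of Theorem~\ref{ElGeneralisimo} holds whenever $n \ge 4$. The finitely many smaller degrees are immediate: $n=1$ is vacuous; for $n=2$ distinct quadratic fields have distinct discriminants, so $d_L=d_K$ already forces $L\cong K$; and for $n=3$ a totally real cubic of fundamental discriminant is non-cyclic (a cyclic cubic has square discriminant), and the conclusion follows either directly from Theorem~\ref{ElGeneralisimo} when $d_K$ is squarefree or from the analysis of integral trace forms of cubic fields via Bhargava's cube law, cf.\ \cite{Man}. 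I expect the only genuine obstacle to be step (a) at the prime $2$: one must verify that a fundamental discriminant is incompatible with any ramification at $2$ whose inertia is a cycle of odd length (rather than a transposition or a transposition forced by wild ramification), so that the ``generated by transpositions'' conclusion, and hence the identification $G=S_n$, survives.
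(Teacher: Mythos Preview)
Your approach is essentially the paper's own. Two small points of comparison: the paper does not reprove (a) but simply cites Kondo \cite{Tkondo}, whose argument is exactly the transposition-generation one you sketch (and which also settles your flagged concern at $p=2$, since Kondo shows that for fundamental discriminant at most one $K_i/\Q_2$ is ramified and it is quadratic); and for $n=3$ the paper does not split into squarefree versus non-squarefree but instead uses Lemma~\ref{FromTodoTwoZero} to pass from an isometry of integral trace forms to one of trace-zero forms, after which \cite[Theorem~6.5]{Man} applies uniformly. Your vaguer invocation of \cite{Man} for the cubic case is pointing at the right place but skips this reduction step, which is what makes the cited result applicable.
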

 
 \begin{coro}[Corollary \ref{IntTraceCompleteInv}]
 The integral trace form is a complete invariant for totally real number fields of fundamental discriminant.
 
 \end{coro}
 
 \begin{example}
 
 Let $K$ and $L$ be the sextic  fields defined respectively by the polynomials $x^6-3x^5-6x^4+17x^3+2x^2-8x+1$ and $x^6-x^5-12x^4-x^3+22x^2-1$, and let $d_{f}:= d_f(K,L)$ and $d_{s}:= d_s(K,L).$ The fields $K$ and $L$ are totally real of discriminant $d=2^3\cdot 11 \cdot 619 \cdot 11411$. In this case $d_{s}=8$ and $d_{f}=77697499$, and since $d$ is fundamental  ${\rm rad}(d_{s})^{2}=4$. The fields $K$ and $L$ are not isomorphic, for instance a calculation shows that the prime $p=11$ has three primes in $K$ lying over it, while it has two in $L$, hence their integral traces are not isometric. 
 \end{example}
  
  \begin{teo}[cf.Theorem \ref{SqFDiff}]
Let $K$ and $L$ be  totally real number fields with square free different ideal. Then any isometry $\phi: (K, {\rm tr}_{K/\mathbb{Q}}) \rightarrow (L, {\rm tr}_{L/\mathbb{Q}})$ such that $\phi(O_K)=O_L$ is equal to plus or minus an isomorphism of fields $K \cong L$.
 \end{teo}
  
\subsubsection{Some applications}

As a byproduct of the methods used in the proofs of the above results we can provide, for real number fields with square free different, an explicit description of the automorphism group of the integral trace.

\begin{teo}[cf.Corollary \ref{ElAuto}]
Let $K$ be a totally real number field with square free different ideal. Then, \[{\rm Aut} \left((O_K,{\rm tr}_{K/\mathbb{Q}}) \right) \cong 
\Z/2\Z \times {\rm Aut}(K).\] In particular, if $n>2$ and $K$ is a $S_{n}$ number field then the automorphism group of the integral trace is ``trivial".
\end{teo}

\begin{example}

Let $K$ be the biquadratic field $\Q(\sqrt{5}, \sqrt{13})$. This is a totally real number field with  different ideal the principal generated by $\sqrt{65}$. Since $\sqrt{5}$ and $\sqrt{13}$ generate prime ideals in $O_{K}$ the different ideal is square free. Since $K$ is a Galois number field, with Galois group the Klein group, then  ${\rm Aut} \left((O_K,{\rm tr}_{K/\mathbb{Q}}) \right) \cong 
\Z/2\Z \times \Z/2\Z \times \Z/2\Z.$ 

\end{example}

A consquence of the above result is that for real fields of square free discriminant the automorphism group of the integral trace is trivial.

\begin{coro}[Corollary \ref{AutTraceFundDisc}]
Let $K$ be a real number field of degree at least $3$ and with square free discriminant. Then, \[{\rm Aut} \left((O_K,{\rm tr}_{K/\mathbb{Q}}) \right) \cong 
\Z/2\Z.
\]
\end{coro}

 A further application of the results here, which is ongoing work, is a method to count real number fields with square free discriminant. Using the results of this paper, together with some control of the local behavior of the trace, we can apply Siegel's mass formula to obtain some counting results; at least for fields of small degree. 
 
\subsection{Motivation on definitions and results} 
 
One of the main problems in algebraic number theory is to give a satisfactory way of deciding whether or not two number fields are isomorphic. Inspired by the style of question of  Cornelissen and Marcolli in \cite{Corn1}, this query can be formulated as follows:
\begin{question}\label{LaPregunta}
Can one describe an isomorphism between number fields $K$, $L$ from an associated mathematical/arithmetical object? In other words, can we find a complete invariant for number fields?
\end{question} 
Several natural objects have been at the center of study of this question by several authors; e.g., the Dedekind zeta function, the ring of Adeles, the group of Dirichlet Characters, the absolute Galois group (see \cite{Corn1, Iwa, Komat, Neu, Perlis1, Uchida}). From all these the only one that is a complete invariant, thanks to a famous result of Neukirch and Uchida, see \cite{Neu, Uchida}, is the absolute Galois group.\\

All the above invariants have in common that they refine the discriminant; i.e., having the same invariant for $K$ and $L$ implies that the two fields have the same discriminant. Thanks to Hermite and Minkowski it is known that, up to conjugation, there are only finitely many number fields of a given discriminant. This result has led to the study of number fields via their discriminants, and specifically to the industry of results about the asymptotics of the number of fields up to a given discriminant bound, sometimes with specific signature types or Galois groups. Some of the examples of such investigations can be found in works of Bhargava, Cohen, Davenport-Heilbron,  Datskovsky-Wright, Ellenberg-Venkatesh, Jones, Kl\"uners, Lemke Oliver-Thorne, Malle, P. Harron, R. Harron, Roberts, Taniguchi-Thorne, Shnidman, Wang and many others (See \cite{bha2, BH, bhashn, DabWri, EV1, ev, Klu, Jones, LemkeThorne, PH, RH, Roberts, thorne, DTerr, wang}). Since the discriminant of a number field $K$ of degree $n$ is basically the signed co-volume of the lattice $O_{K}$ inside of $\R^{n}$ it is only natural to study the isometry class of such lattices as a way to refine the discriminant. In other words the isometry class of the integral bilinear pairing \begin{displaymath}
\begin{array}{cccc}
 \mathrm{tr}_{K/\Q}: & O_{K} \times O_{K} &\rightarrow& \Z  \\  & (x,y) & \mapsto &
\mathrm{Tr}_{K/\Q}(xy).
\end{array}
\end{displaymath}
 is the first natural object that comes to mind when dealing with the problem of dividing the classification of number fields of the same discriminant in a smaller subclass. This invariant, and the closely related {\it  shape}, are used for instance by Ellenberg and Venkatesh in their paper about asymptotics of number fields of bounded discriminant; see \cite[Remark 3.2]{ev}. \\ 
 
 Trace forms over fields, and their applications, have been extensively studied by many authors; see for instance the works of Eva Bayer-Fluckiger, J.P. Serre and their co-authors \cite{BaLe, BaPaSe, SerreTrace} and the work of Robert Perlis and his co-authors \cite{Perlis}. However, most of the literature around the classification results about the integral trace over number fields is scarce. These kinds of classification questions have been studied by the second named author from four different, basically disjoint, perspectives:
 
 \begin{itemize}
 
 \item [(i)] The first one uses the techniques developed by M. Bhargava in his thesis, specifically the higher composition laws in cubes and their relation to cubic fields. See \cite{Man} for details and for a proof of 
 
 \begin{theorem}\label{LoCubicos} Let $K,L$ be cubic fields such that $K$ has positive fundamental discriminant. Then,  \[(O_K,{\rm tr}_{K/\mathbb{Q}}) \cong  (O_L,{\rm tr}_{L/\mathbb{Q}}) \ \mbox{if and only if} \ K \cong L.\] 
 
 \end{theorem}
 
 \item[(ii)] A local-global point of view. Using the Adelization of the orthogonal group one can show that genus and the spinor genus of the integral trace form coincide. Together with the classification genus of the integral trace, this method permits  to deal with the problem in the case of non-real fields that have no wild ramification. See \cite{Man5} and \cite{Man7} for details and a proof of

  \begin{theorem} Let $K,L$ be two  number fields with the same discriminant $d$, same signature and without wild ramification. Suppose that  the fields are non-totally real and that they do not have wild primes. Then,  \[  (O_K,{\rm tr}_{K/\mathbb{Q}}) \cong  (O_L,{\rm tr}_{L/\mathbb{Q}})  \iff  \left( \frac{\alpha_{K}^{p}}{p}\right)=  \left( \frac{\alpha_{L}^{p}}{p}\right) \ \mbox{for all odd primes $p$ that divide $d$}.\]  For extra details of the construction of the elements $\alpha_{K}^{p}$ see also \cite{Man9}.
 
 \end{theorem}

 \item[(iii)] Using that rational quadratic forms can be seen as cohomology classes in H$^{1}(\Q, {\rm O}_{n})$ and that Artin representations can be seen as elements of H$^{1}(\Q, {\rm GL}_{n}(\C))$ one can study the relation between Dedekind zeta functions of number fields and their integral traces. See \cite{Man6} for details and for a proof of
 
   \begin{theorem} Let $K,L$ be two  number fields. Suppose that  $K$ is non-totally real and that no rational prime has wild ramification in it. Then,  \[ \zeta_{K}(s)=\zeta_{L}(s) \Longrightarrow (O_K,{\rm tr}_{K/\mathbb{Q}}) \cong  (O_L,{\rm tr}_{L/\mathbb{Q}})  .\] 
 
 \end{theorem}

 \item[(iv)] Imposing the Galois theoretic conditions, mainly prime power Galois cyclic extensions, a classification of the integral trace and shape can also be obtained for such fields. See \cite{Man8} and \cite{Bol} for details and a proof of
 
  \begin{theorem} Let $n$ be a positive integer Let $K,L$ be two Galois number fields with Galois group isomorphic to $\Z/n\Z$ and without wild primes. Then,  \[(O_K,{\rm tr}_{K/\mathbb{Q}}) \cong  (O_L,{\rm tr}_{L/\mathbb{Q}}) \ \mbox{if and only if} \ {\rm disc}(K) \cong {\rm disc}(L).\] 
 
 \end{theorem}

\end{itemize}
 
 \subsubsection{Motivation behind the definition of $d_{s}(K,L)$}
 
 Cases (i) and (iv) have an interesting duality for cubic fields. The former says that for square free discriminant the trace is a great improvement of the discriminant, while the latter says that for perfect square discriminants the trace does not provide new information. Hence, it is interesting to wonder what happens in the intermediate cases, i.e., neither square free nor perfect square discriminants. As the following example shows the answer could go either way
 
 \begin{example} Let $K_{1}, K_{2}$ and $K_{3}$ be the cubic fields defined respectively by the cubic polynomials $f_{1}:=x^3 - 36x - 78, f_{2}:=x^3 - 18x - 6$ and $f_{3} :=x^3 - 36x - 60$. A calculation shows that these fields are not isomorphic,  all have discriminant $2^2\cdot3^5\cdot23$ and \[(O_{K_{1}},{\rm tr}_{K_{1}/\mathbb{Q}}) \cong  (O_{K_{3}},{\rm tr}_{K_{3}/\mathbb{Q}}) \not\cong  (O_{K_{2}},{\rm tr}_{K_{2}/\mathbb{Q}}). \]
 
 \end{example}

This lead us to think that to generalize case (i) to higher degree number fields we  need to generalize the square free condition of the discriminant; this was in fact supported by computational evidence. Number fields with square free discriminant have square free different ideal. It turns out, as a simple calculation shows, that for cubic fields both conditions are equivalent. 

\begin{remark}
Let $K$ be a cubic number field. Then, $K$ has square free different ideal if and only if it has square free discriminant.
\end{remark}

A general equivalent condition to having square free different ideal is given by the following result. We call a number field $K$ tame if no prime has wild ramification in $K$.

\begin{proposition}
Let $K$ be a number field. The field $K$ has square free different ideal if and only if $K$ is tame and for every prime $p$ the maximal ramification degree is at most $2$. 
\end{proposition}

\begin{proof} The result follows from the formula for the exponents of primes appearing in the different ideal. More explicitly, recall that for a prime $\mathfrak{P}$ in $O_{K}$ lying over a prime $p$ with ramification degree $e$, the exponent of $\mathfrak{P}$ on the different ideal is at least $e-1$. Moreover, it is $e-1$ if and only if $p\nmid e$.
\end{proof}
A possible generalization to having  square free discriminant could be having symmetric group as the Galois group and having ramification exponents bounded by $2$. This is explained thanks to the above and since number fields with fundamental discriminant have Galois group the full symmetric group. In fact this is the motivation behind Theorem \ref{TotallyRealFundDiscG}. The main results of this paper, Theorem \ref{ElGeneralisimo}, pushes this generalization even further. As long as the part of the discriminant that comes from prime with ramification degrees bigger than $2$ is not so big, then the integral trace is a complete invariant.\\

 Besides the motivation to find the right generalization to Theorem \ref{LoCubicos}, in this paper we introduce and develop the  Casimir pairings. With it we introduce a fifth approach to the integral trace invariant problem. This new method, combined with several of the points of view described above, yields to most of the results of the paper.

\subsection{Outline of the paper}

In \S \ref{CasimirPairings} we introduce the theory of Casimir Pairings and show many of its properties which we will need later on. In \S \ref{Disjoint} we study linearly disjoint number fields, the relation of Casimir Pairings with them and we give a general criterion to decide whether or not two number fields are conjugate. In \S \ref{CasimiroYLosNumberFields} we do a local-global study of Casimir elements associated to trace pairings over number fields; this section is the connection between the general theory of Casimir pairings and number fields. Finally in \S \ref{LosResultados} we provide the proofs to the number theoretic applications from all the results of the previous sections.

\section{Casimir Pairings}\label{CasimirPairings}

Let $V$ be a finite dimensional vector space over a field $k$ endowed with a nondegenerate bilinear form $B$, and let $V^*={\rm Hom}(V,k)$ be its dual. The isomorphism $ \Gamma_B:V \rightarrow V^*,\, v\mapsto \Gamma_B(v):=B(v,\cdot)$  determines a bilinear form on $V^*$, namely the pullback of $B$ via $\Gamma_B^{-1}$; we will denote this form by  $\langle \cdot , \cdot\rangle_{B}$ and we call it {\it Casimir pairing} associated to $B$.

For a field $k$ let us denote by ${\rm FVectB}_k$ the category of pairs $(V,B)$ where $V$ is a finite dimensional $k$-vector space and $B$ a nondegenerate $k$-bilinear form on $V$ with morphisms given by bijective $k$-linear isometries. 

\begin{proposition} The functor
\begin{center}
\begin{tikzcd}[column sep=large, row sep=tiny]
{\rm FVectB}_k \arrow[r, "\mathcal{CS}"] &  {\rm FVectB}_k \\
(V,B_V) \arrow[d, "\phi"'{name=f}] & (V^*, \langle \cdot, \cdot \rangle_{B_V})  \\[5ex]
(W,B_W) & (W^*, \langle \cdot, \cdot \rangle_{B_W}) \arrow[u, "\phi^*"'{name=Ff}]
\arrow[mapsto,from=f, to=Ff,shorten=3.7em]
\end{tikzcd}
\end{center}

\noindent mapping $(V,B)$ to $(V^*,\langle \cdot , \cdot\rangle_{B})$ and an isometry $\phi:V \rightarrow W$ to its dual $\phi^*$ is a well-defined duality of categories.
\end{proposition}

\begin{proof}
An object $(V,B)$ in ${\rm FVectB}_k$ comes equipped with the $k$-linear isomorphism $ \Gamma_B$, which we are forcing to be an isometry between $(V,B)$ and  $\mathcal{CS}(V,B)$. In particular, $\mathcal{CS}(V,B)$ is nondegenerate, i.e., the functor is well defined on objects. To check that it is well defined on morphisms, note that an isometry $\phi: (V,B) \rightarrow (W,B')$, by definition, makes the following diagram of $k$-linear maps commutative: \begin{center}
  \begin{tikzpicture}
\matrix(m)[matrix of math nodes,
row sep=2.6em, column sep=4.5em,
text height=1.5ex, text depth=0.25ex]
{V&V^*\\
W& W^*\\};
\path[->,font=\scriptsize,>=angle 90]
(m-1-1) edge node[auto] {$\Gamma_B$} (m-1-2)
edge node[auto] { $\phi$} (m-2-1)
(m-2-2) edge node[auto] {$\phi^*$} (m-1-2)
(m-2-1) edge node[auto] {$\Gamma_{B'}$} (m-2-2);
\end{tikzpicture}.  
\end{center}
 Thus if $\phi$ is bijective, the map $\phi^*: \mathcal{CS}(V,B) \rightarrow \mathcal{CS}(W,B')$ is also a bijective isometry.  An inverse for $\mathcal{CS}$ is given by the  functor  $\mathcal{CS}^t$ mapping $(V,B)$ into $V^*$ together with bilinear map that makes $\Gamma_B^{t}:V\rightarrow V^*, \Gamma_B^{t}(v):=B(\cdot,v)$ into an isometry. 
\end{proof}
We stress here that this is a very natural way to endow $V^*$ with bilinear pairings, in a sense made precise by Remark \ref{Natural} below.\\
\begin{remark}\label{Natural}
The functors $\mathcal{CS}^t$ and $\mathcal{CS}$ give a (dual) adjoint equivalence, with the unit and counit induced by the usual evaluation map $V \rightarrow V^{**}$. In particular, $\mathcal{CS}$ becomes self-dual adjoint when we restrict to  symmetric bilinear forms.
\end{remark}

\begin{remark}\label{ScalarExt}
Notice that $\mathcal{CS}$ commutes with scalar extension, i.e., if $k'/k$ is a field extension, then the following diagram  commutes:
\begin{center}
  \begin{tikzpicture}
\matrix(m)[matrix of math nodes,
row sep=2.6em, column sep=4.5em,
text height=1.5ex, text depth=0.25ex]
{{\rm FVectB}_k&{\rm FVectB}_k\\
{\rm FVectB}_{k'}&{\rm FVectB}_{k'}\\};
\path[->,font=\scriptsize,>=angle 90]
(m-1-1) edge node[auto] {$\mathcal{CS}$} (m-1-2)
edge node[auto] {$(-)\otimes_k k'$} (m-2-1)
(m-1-2) edge node[auto] {$(-)\otimes_k k'$} (m-2-2)
(m-2-1) edge node[auto] {$\mathcal{CS}$} (m-2-2);
\end{tikzpicture}.  
\end{center}

\end{remark}

 Let $\{v_1,\ldots,v_n\}$ be an ordered $k$-basis of $V$ and let $\{f_1,...,f_n\}$ be its dual basis. We say that $\{v_1^*, \ldots, v_n^*\} \subseteq V$ is {\it the dual  basis of $\{v_1,\ldots,v_n\}$ with respect to $B$} if $f_i=B(v_i^*,\cdot)$ for each $i$. The next lemma gives us a concrete useful formula to compute Casimir parings.

\begin{lemma}\label{ExplicitFormula}
Let $(V,B) \in {\rm FVectB}_k$ and let  $\{v_1,\ldots,v_n\}$ be a $k$-basis of $V$. Then, 
\[ \langle \psi, \phi \rangle_{B} =\sum_{i=1}^{n} \psi(v_i^{*}) \phi(v_i),\] 
where $\{v_1^*, \ldots, v_n^*\}$ is the dual  basis of $\{v_1,\ldots,v_n\}$ with respect to $B$. In particular, the above representation of $ \langle \psi, \phi \rangle_B $ is independent of the choice of basis $\{v_1,\ldots,v_n\}$.
\end{lemma}
\begin{proof}
Let  $\{f_1,\ldots,f_n\}$ be the dual basis $\{v_1,\ldots,v_n\}$. Since both sides of the equality are bilinear on $\phi$ and $\psi$, it is enough to check it when $\phi=f_r$ and $\psi=f_s$, for each $1\leq r,s \leq n$. In this case, $\sum_{i=1}^{n} \phi(v_i^{*}) \psi(v_i)= f_r(v_s^*)\cdot 1$ and $\langle \psi, \phi \rangle_B=B(v_r^*,v_s^*)=f_r(v_s^*)$.
\end{proof}

Now that we have a natural way to get  pairings on $V^*={\rm Hom}_k(V,k)$, we can use it to get parings on more general hom sets ${\rm Hom}_k(V,W)$. We do this  by identifying ${\rm Hom}_k(V,W)$ with $V^* \otimes_k W$ via the usual isomorphism; taking  $f\otimes w \in V^* \otimes_k W$ to the map $v \mapsto f(v)w$.

\begin{definition}
Let $k$ be a field and $(V,B) \in {\rm FVectB}_k $.
\begin{enumerate}[(i)]
    \item Given $k$-bilinear map of vector spaces $V_1 \times V_2 \rightarrow V_3$, the associated Casimir pairing is the map \[{\rm Hom}_k(V,V_1) \otimes_k {\rm Hom}_k(V,V_2)  \rightarrow V_3,\] obtained by taking the tensor product \[ \langle \cdot , \cdot\rangle_{B} \otimes m: (V^* \otimes_k V_1) \otimes_k (V^* \otimes_k V_2) \to (k \otimes_k V_3) \]

    of the maps  $\langle \cdot , \cdot\rangle_{B}:V^* \otimes_k V^* \rightarrow k$ and $m: V_1 \otimes_{k} V_{2} \to V_{3}$.
    \item When $V_1=V_2=V_3=R$ is an $k$-algebra and $R\times R \rightarrow R$ is the multiplication map, we call the corresponding pairing
    \[ {\rm Hom}_k(V,R) \otimes_k {\rm Hom}_k(V,R)  \rightarrow R,\]
    the Casimir paring on ${\rm Hom}_k(V,R)$ associated to $B$.
\end{enumerate}
\end{definition}
\begin{remark}
More explicitly, Lemma \ref{ExplicitFormula} shows that the Casimir paring associated to $V_1 \otimes V_2 \rightarrow V_3$ is the map that takes $ \psi\otimes \phi \in  {\rm Hom}_k(V,V_1) \otimes_k{\rm Hom}_k(V,V_2)$ to the image of the element
\[\sum_{i=1}^{n} \psi(v_i^{*}) \otimes \phi(v_i) \in V_1\otimes V_2\]
under  $V_1 \otimes_k V_2 \rightarrow V_3$. Where as before $\{v_i\}$ is any $k$-basis of $V$ and $\{v_i^*\}$ its dual with respect to $B$. 
\end{remark}

By slight abuse of notation for any $k$-algebra $R$ we  still denote the associated Casimir paring on ${\rm Hom}_k(V,R)$ by $\langle \cdot , \cdot\rangle_{B}$.

\begin{example}\label{ElCasimiro}
Let $\mathfrak{g}$  be an $n$-dimensional Lie algebra over a field $k$ with $\text{char}(k)=0$. By Cartan's criterion the Killing form $B$ on $\mathfrak{g}$ is nondegenerate if and only if $\mathfrak{g}$ is semisimple. In that case, if we take $V=\mathfrak{g}$, $R=U(\mathfrak{g})$ its universal enveloping algebra and $\iota :\mathfrak{g} \hookrightarrow U(\mathfrak{g})$ the canonical inclusion, then \[C:=\langle \iota, \iota \rangle_B \in  U(\mathfrak{g})\]

\noindent is the well known quadratic Casimir element of $\mathfrak{g}$. 
Despite its simplicity, the Casimir element turns out to play a central role in the modern proofs  of several key theorems in the representation theory of Lie algebras such as  Weyl's theorem on complete reducibility (see \cite[Section 6.3]{Humphreys}), as well as in  Weyl's and Konstant's character formulas as shown in the  appendix to chapter VI in \cite{Humphreys}. 
\end{example}

\begin{example} Let $(M,g)$ be a Riemannian manifold. Since the metric $g$ is everywhere nondegenerate, it gives isomorphisms between the tangent bundle $TM$ and its dual $T^*M=\Omega^1(M)$ known as the musical isomorphisms $\flat$ and $\sharp$. One precisely uses  $\sharp: \Omega^1(M) \rightarrow TM$ to induce an inner product on $\Omega^1(M)$, i.e., the usual inner product of $1$-forms is pointwise the Casimir pairing on $\Omega^1_p(M)={\rm Hom}(T_pM, \mathbb{R})$ associated to $g_p$. 

\end{example}

\begin{proposition}\label{FunctorialityCasimir} For a fixed $(V,B) \in {\rm FVectB}_k$, the formation of Casimir pairings is functorial on $V_1\otimes_k V_2 \rightarrow V_3$. More precisely, for each commutative diagram

\begin{center}
  \begin{tikzpicture}
\matrix(m)[matrix of math nodes,
row sep=2.6em, column sep=4.5em,
text height=1.5ex, text depth=0.25ex]
{V_1\otimes_k V_2&V_3\\
V_1'\otimes_k V_2'&V_3'\\};
\path[->,font=\scriptsize,>=angle 90]
(m-1-1) edge node[auto] {} (m-1-2)
edge node[left] {$f\otimes g$} (m-2-1)
(m-1-2) edge node[auto] {} (m-2-2)
(m-2-1) edge node[auto] {} (m-2-2);
\end{tikzpicture},  
\end{center}

\noindent the diagram  on Casimir pairings
\begin{center}
  \begin{tikzpicture}
\matrix(m)[matrix of math nodes,
row sep=2.6em, column sep=4.5em,
text height=1.5ex, text depth=0.25ex]
{{\rm Hom}_k(V,V_1) \otimes_k{\rm Hom}_k(V,V_2)&V_3\\
{\rm Hom}_k(V,V_1') \otimes_k{\rm Hom}_k(V,V_2')&V_3'\\};
\path[->,font=\scriptsize,>=angle 90]
(m-1-1) edge node[auto] {} (m-1-2)
edge node[left] {$f_*\otimes g_*$} (m-2-1)
(m-1-2) edge node[auto] {} (m-2-2)
(m-2-1) edge node[auto] {} (m-2-2);
\end{tikzpicture}  
\end{center}

\noindent is also commutative.
\end{proposition}

\begin{proof} This is already encoded in the definition since the identification we are using $V^*\otimes_k W ={\rm Hom}_k(V,W)$ is functorial on $W$.\end{proof}

The following statements contain the main features of the  Casimir pairings that will be needed latter on.

\begin{corollary}\label{SomeProp} Let $(V,B) \in {\rm FVectB}_k$, $R$ be a $k$-algebra and $\phi,\psi \in {\rm Hom}_k(V,R)$.
\begin{enumerate}[(i)]
\item Functoriality on $R$: If $\theta:R \rightarrow R'$ is a homomorphism of $k$-algebras then \[\theta\left( \langle \psi,\phi \rangle_B\right)=\langle \theta \circ\psi, \theta \circ \phi \rangle_B.\]
\item Scalar extension: If $k'/ k$ is a field extension, then $$\langle \psi \otimes 1,\phi\otimes  1 \rangle_{B\otimes k'}=\langle \psi,\phi \rangle_B\otimes 1 \in R\otimes_k k'.$$
\end{enumerate}
\end{corollary}

\begin{proof}
A homomorphism of $k$-algebras is a map $\theta$ making the diagram 
    \begin{center}
  \begin{tikzpicture}
\matrix(m)[matrix of math nodes,
row sep=2.6em, column sep=4.5em,
text height=1.5ex, text depth=0.25ex]
{R\otimes_k R& R\\
R'\otimes_k R'&R'\\};
\path[->,font=\scriptsize,>=angle 90]
(m-1-1) edge node[auto] {} (m-1-2)
edge node[left] {$\theta\otimes \theta$} (m-2-1)
(m-1-2) edge node[auto] {$\theta$} (m-2-2)
(m-2-1) edge node[auto] {} (m-2-2);
\end{tikzpicture}  
\end{center}
\noindent commutative. Thus part $(i)$ this follows from the functoriality claimed in Proposition \ref{FunctorialityCasimir}. Part $(ii)$ is a consequence of the compatibility in Remark \ref{ScalarExt}. 
\end{proof}

\begin{corollary}\label{FaithfullFunc} Let $R$ be an algebra over a field $k$.

\begin{enumerate}[(i)]
    \item  The functor from ${\rm FVectB}_k$ to the category of $R$-modules with bilinear forms taking $(V,B)$ to  $R$-module ${\rm Hom}_k(V,R)$ endowed with its Casimir pairing $\langle \cdot,\cdot \rangle_B$ and a map $\phi:(V,B) \rightarrow (V',B')$ to $\phi^*$ is a contravariant embedding. 
    \item  Let $(V,B)$ and $(V',B')$ be objects in  ${\rm FVectB}_k$.  A $k$-linear map $\phi:V \rightarrow V'$ is a bijective isometry of $k$-spaces if only if \[\phi^*:{\rm Hom}_k(V',R) \rightarrow {\rm Hom}_k(V,R) \] is a bijective isometry of $R$-modules, where we endow the hom sets with the associated Casimir pairings. 
\end{enumerate}
  
\end{corollary}
\begin{proof}

Since $k$ is a field, every $k$-algebra is faithfully flat over $k$. Hence the scalar extension $(-)\otimes_k R$ from ${\rm FVectB}_k$ to the category of $R$-modules with bilinear forms
is an embedding. The functor in $(i)$ is the composition of this with the duality $\mathcal{CS}$. Since we can tell whether or not an arbitrary $k$-linear map is a bijective isometry after a faithfully flat extension of scalars, this  factorization also proves  $(ii)$.
\end{proof}

More importantly for our purposes, when ${\rm char}(k) \neq 2$ and $R$ is a commutative $k$-algebra, restricting the functor in part $(i)$ to symmetric bilinear forms yields a contravariant embedding 
\[ {\rm QSp}_k^* \hookrightarrow {\rm QMod}_R,\]
from the category nondegenerate quadratic $k$-spaces and (bijective) isometries to the category of quadratic $R$-modules.

\subsection{Trace forms}
\begin{definition}
Let $k$ be a field and let $E$ be an \'etale algebra over $k$.  For any $k$-algebra $R$ we denote by  $\langle \cdot, \cdot\rangle_{{\rm tr}_{E/k}}$ the Casimir pairing on ${\rm Hom}_{k}(E,R)$ associated to the non-degenerate symmetric bilinear form given by the trace pairing 
\begin{displaymath}
\begin{array}{cccc}
 \mathrm{tr}_{E/k}: & E \times E &\rightarrow& k  \\  & (x,y) & \mapsto &
\mathrm{Tr}_{E/k}(xy).
\end{array}
\end{displaymath}
\end{definition}

\begin{lemma}\label{TracePairingOrtonormalBasis}
Let $E$ be an \'etale algebra over $k$. Suppose that $\Omega/k$ is a separable field extension that splits $E$, i.e., such that $E \otimes_{k} \Omega \cong \Omega^n$.  Then  $\{\sigma_1,\ldots,\sigma_n\}:={\rm
Hom }_{k-alg}(E,\Omega)$ is an orthonormal $\Omega$-basis of ${\rm Hom}_{k}(E,\Omega)$.
\end{lemma}

\begin{proof} In the case $E=k^n$ the algebra is already split over $k$, the trace form  is the usual dot product on $k^{n}$ and the corresponding isomorphism $k^n \rightarrow {\rm Hom}_k(k^n,k)$ takes the standard $i$-th basis vector $e_i$ to the $i$-th projection $\pi_i:k^n \rightarrow k$. Since $\{e_{1},\cdots, e_{n}\}$ is an orthonormal basis of $k^n$, the set $\{\pi_{1},\cdots, \pi_{n}\}$ is an orthonormal basis of $ {\rm Hom}_k(k^n,k)$. In the general case, applying ${\rm Hom}_\Omega(-,\Omega)$ to an isomorphism $\sigma: E \otimes_k\Omega \rightarrow \Omega^n$,  we get an  isometry of $\Omega$-spaces

\[\Omega^n \rightarrow {\rm Hom}_\Omega(\Omega^n,\Omega) \xrightarrow{\sigma^*} {\rm Hom}_\Omega(E \otimes_k \Omega,\Omega)= {\rm Hom}_{k}(E,\Omega), \]
\noindent which takes the canonical basis $\{e_1,\ldots,e_n\}$ of $\Omega^n$ to $\{\sigma^*(\pi_1),\ldots, \sigma^*(\pi_n)\}=\{\sigma_1, \ldots, \sigma_n\}$.
  \end{proof}

It follows from the above that any $k$-linear map from $E$ to $\Omega$ can be written in terms of Casimir elements: 
\begin{corollary}\label{FourierCoefficients}
Let $k,E, \Omega$ and $\{\sigma_1,\ldots,\sigma_n\}$ be as above. Then, for all $\phi \in {\rm Hom}_{k}(E,\Omega)$ \[\phi=\sum_{i=1}^{n} \langle \phi , \sigma_i\rangle_{{\rm tr}_{E/k}} \,\sigma_i.\]
\end{corollary}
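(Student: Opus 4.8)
The plan is to derive this immediately from Lemma \ref{TracePairingOrtonormalBasis} by the standard ``orthonormal expansion'' argument. Since $\{\sigma_1,\ldots,\sigma_n\}$ is an $\Omega$-basis of the $n$-dimensional $\Omega$-space ${\rm Hom}_F(K,\Omega)$, any $\phi \in {\rm Hom}_F(K,\Omega)$ can be written uniquely as $\phi = \sum_{i=1}^n c_i \sigma_i$ with $c_i \in \Omega$. First I would pair both sides with $\sigma_j$ using the Casimir pairing $\langle \cdot,\cdot\rangle_{{\rm tr}_{K/F}}$; since this pairing is $\Omega$-bilinear (by the remark following the definition of the Casimir pairing, applied with the commutative $F$-algebra $D = \Omega = R$), we get $\langle \phi, \sigma_j\rangle = \sum_i c_i \langle \sigma_i,\sigma_j\rangle$. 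By the orthonormality from Lemma \ref{TracePairingOrtonormalBasis}, $\langle \sigma_i,\sigma_j\rangle = \delta_{ij}$, so the right-hand side collapses to $c_j$. Hence $c_j = \langle \phi,\sigma_j\rangle_{{\rm tr}_{K/F}}$ for every $j$, which is exactly the claimed formula $\phi = \sum_{i=1}^n \langle \phi,\sigma_i\rangle_{{\rm tr}_{K/F}}\, \sigma_i$.

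The only point requiring a word of care is the use of bilinearity in the correct variable: the remark in the paper only asserts $D$-bilinearity, and one should make sure the pairing is linear in the slot occupied by $\phi = \sum_i c_i\sigma_i$ — but since $R = \Omega$ is itself commutative here, the pairing is $\Omega$-bilinear in both arguments, so there is no issue pulling the scalars $c_i$ out. I do not anticipate any real obstacle; the content is entirely in Lemma \ref{TracePairingOrtonormalBasis}, and this corollary is just the observation that an orthonormal basis lets one read off coordinates as pairings, exactly as with Fourier coefficients in an inner product space (which is why the statement is named \texttt{FourierCoefficients}). The proof is two or three lines.
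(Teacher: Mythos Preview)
Your proposal is correct and is exactly the argument the paper has in mind: the paper does not even give a separate proof, presenting the corollary as an immediate consequence of Lemma~\ref{TracePairingOrtonormalBasis} via the standard orthonormal-expansion argument you describe. Your remark about $\Omega$-bilinearity is the only point needing a moment's thought, and you handle it correctly since here $R=\Omega$ is commutative.
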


\begin{remark}\label{GaloisEtale}
Grothendieck's formulation of Galois theory gives a duality between \'etale $k$-algebras and finite $\mathfrak{G}_k$-sets, where $\mathfrak{G}_k={\rm Gal}(k_s/k)$ and $k_s$ is a separable closure of $k$. There, an \'etale algebra $E$ corresponds precisely to the $\mathfrak{G}_k$-set ${\rm
Hom }_{k-alg}(E,k_s)$. The fact that ${\rm
Hom }_{k-alg}(E,k_s)$  also happens to be an orthonormal $k_s$-basis for ${\rm
Hom }_{k}(E,k_s)$ allows us to leverage this correspondence. For instance, a decomposition of $E$ as a product $\prod_i L_i$ of separable extensions $L_i$ amounts to  a decomposition into transitive $\mathfrak{G}_k$-sets  \[{\rm
Hom }_{k-alg}(E,k_s)= \bigsqcup_i {\rm
Hom }_{k-alg}(L_i,k_s),\] which in turn induces an orthonormal decomposition of $k_s$-spaces
\[{\rm
Hom }_{k}(E,k_s)= \bigot_i {\rm
Hom }_{k}(L_i,k_s).\]

\end{remark}

\begin{corollary}\label{TheMatrixU}
Let $k$ be a field and let $E/k$ and $E'/k$ be  \'etale algebras same dimension $n$. Suppose that $\Omega/k$ is a separable field extension splitting both $E$ and $E'$. Let $\phi: E \rightarrow E'$ be an $k$-linear map. Then the following are equivalent:
\begin{enumerate}[(i)]
\item The map $\phi$ is an isometry between $(E,{\rm tr}_{E/k})$ and $(E',{\rm tr}_{E'/k})$.
\item The map $\phi^*$ is an isometry between the $\Omega$-spaces ${\rm Hom}_k(E,\Omega)$  and ${\rm Hom}_k(E',\Omega)$.
\item The matrix $U=\left(c_{ij} \right)$ is orthogonal, where $c_{ij}:=\langle \sigma_i, \tau_j\phi \rangle_{{\rm tr}_{E/k}}$, and  $\{\sigma_i\}$, $\{\tau_i\}$ are the sets of homorphisms of $k$-algebras of $E$ and $E'$ into $\Omega$.
\end{enumerate}
\end{corollary}

\begin{proof}
The equivalence of $(i)$ and $(ii)$ is a particular case of Corollary \ref{FaithfullFunc}$(ii)$,  and the equivalence of $(ii)$ and $(iii)$ follows from the fact that $\{\sigma_i\}$ and $\{\tau_i\}$ are orthonormal bases by Lemma \ref{TracePairingOrtonormalBasis}.  
\end{proof}

\section{Linear disjointness}\label{Disjoint}

When studying separable extensions that are trace isometric we discovered that linear disjointness makes matters simpler; more specifically in this case the entries of the matrix of Corollary \ref{TheMatrixU}(iii) are all in the conjugacy orbit of a single Casimir value. Also, when dealing with $S_{n}$ number fields linear disjointness is equivalent to not being conjugate. In this section we explain all this in detail.

\begin{corollary}\label{LinDisjOrthoMatr}
Let $K/k$ and $L/k$ be separable field extension, and suppose $\Omega/k$ is a separable field extension containing a Galois clousure of $KL/k$. Let us denote by $\iota_{K}$ and $\iota_{L}$ the inclusions from $K$ (resp $L$) into $KL$. Suppose that \[\phi: (K,{\rm tr}_{K/k}) \xrightarrow{} (L,{\rm tr}_{L/k})\] is an isometry and let $c \in KL$ be the Casimir element $\displaystyle c:=\langle \iota_K ,\iota_L \phi \rangle_{{\rm tr}_{K/k}}.$ If $K$ and $L$ are linearly disjoint over $k$, then there is an indexing \[{\rm Hom}_{k-alg}(KL, \Omega)= \{\theta_{i,j};   1 \leq i, j\leq n\}\] of the set of $k$- embeddings of $KL$ into $\Omega$ such that the matrix $U:=\theta_{i,j}(c)$ is orthogonal.
\end{corollary}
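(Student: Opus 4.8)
The plan is to combine Corollary \ref{TheMatrixU} with the description of the embeddings of $E=KL$ that comes from linear disjointness. First I would recall that since $K$ and $L$ are linearly disjoint over $F$ and $\Omega$ contains a Galois closure of $E/F$, the restriction map gives a bijection
\[
{\rm Hom}_{F-alg}(E,\Omega) \xrightarrow{\ \sim\ } {\rm Hom}_{F-alg}(K,\Omega) \times {\rm Hom}_{F-alg}(L,\Omega),
\qquad \theta \mapsto (\theta\circ \iota_K,\ \theta\circ\iota_L).
\]
(Indeed both sides have cardinality $n^2$, and linear disjointness says exactly that an $F$-embedding of $K$ and an $F$-embedding of $L$ can always be glued to an $F$-embedding of $E$; surjectivity plus equal cardinality gives the bijection.) Writing $\{\sigma_i\}$ and $\{\tau_j\}$ for the $F$-embeddings of $K$ and $L$ into $\Omega$ as in Corollary \ref{TheMatrixU}, I would use this bijection to index the embeddings of $E$: let $\theta_{i,j}$ be the unique $F$-algebra embedding $E\hookrightarrow\Omega$ with $\theta_{i,j}\circ\iota_K=\sigma_i$ and $\theta_{i,j}\circ\iota_L=\tau_j$.

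Next I would compute $\theta_{i,j}(c)$. By Proposition \ref{SomeProp}$(ii)$ applied to the $F$-algebra homomorphism $\theta_{i,j}:E\to\Omega$,
\[
\theta_{i,j}(c)=\theta_{i,j}\bigl(\langle \iota_K,\iota_L\phi\rangle_{{\rm tr}_{K/F}}\bigr)
=\langle \theta_{i,j}\circ\iota_K,\ \theta_{i,j}\circ\iota_L\circ\phi\rangle_{{\rm tr}_{K/F}}
=\langle \sigma_i,\ \tau_j\circ\phi\rangle_{{\rm tr}_{K/F}}
= c_{ij},
\]
which is exactly the $(i,j)$ entry of the matrix $U$ in Corollary \ref{TheMatrixU}$(iii)$. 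Since $\phi$ is an isometry between $(K,{\rm tr}_{K/F})$ and $(L,{\rm tr}_{L/F})$, Corollary \ref{TheMatrixU} (the equivalence of $(i)$ and $(iii)$) tells us that the matrix $\bigl(c_{ij}\bigr)$ is orthogonal. Hence, with the indexing $\{\theta_{i,j}\}$ just constructed, the matrix $\bigl(\theta_{i,j}(c)\bigr)=\bigl(c_{ij}\bigr)$ is orthogonal, which is the claim.

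The only real content beyond bookkeeping is the first step — that linear disjointness of $K$ and $L$ over $F$ yields the product decomposition of ${\rm Hom}_{F-alg}(E,\Omega)$ — so that is where I would be most careful; everything afterwards is a direct transport of Corollaries \ref{TheMatrixU} and the functoriality in Proposition \ref{SomeProp}$(ii)$. One should double-check that $\Omega$ is large enough to contain a Galois closure of $E/F$ (so that all $n^2$ embeddings land in $\Omega$ and Lemma \ref{TracePairingOrtonormalBasis} applies over $\Omega$), which is part of the hypothesis inherited from Corollary \ref{TheMatrixU}; no enlargement of $\Omega$ is needed.
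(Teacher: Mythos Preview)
Your proposal is correct and follows essentially the same route as the paper: the paper's proof also invokes the bijection of restriction maps coming from linear disjointness, then appeals to Proposition~\ref{SomeProp}(ii) and Corollary~\ref{TheMatrixU}(iii) to conclude. Your write-up simply unpacks these steps more explicitly (and your remark that $\Omega$ already contains a Galois closure of $E=KL$ by the standing hypothesis of Corollary~\ref{TheMatrixU} is exactly right).
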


\begin{proof}
Since $K$ and $L$ are $k$-linearly disjoint the restriction map \[{\rm Hom}_{k-alg}(KL, \Omega)  \to  {\rm Hom}_{k-alg}(K, \Omega) \times {\rm Hom}_{k-alg}(L, \Omega); \hspace{.1in}  \theta  \mapsto  (\theta \circ \iota_{K}, \theta \circ \iota_{L}) 
\] is a bijection. The result follows from this together with Proposition \ref{SomeProp}(i) and ${\rm Corollary}$ \ref{TheMatrixU} (iii). \end{proof}


\subsection{Linear disjointness and an isomorphism criterion for number fields}

When looking for arithmetic invariants of number fields it is always important to have in hand a criterion to decide whether or not two fields are conjugate. As it turns out, for $S_n$ number fields one such a criterion is that the fields are linearly disjoint. Here we prove something a little bit more general.

\begin{proposition}\label{SnDisjoint}
Let $K,L$ be number fields of the same degree $n$ such that $\textrm{Gal}(\widetilde{K}/\mathbb{Q}) \cong S_n$ ($\widetilde{K}$ the Galois closure of $K$) and $L/\mathbb{Q}$ has no intermediate extensions. Then, $K \not \cong L$ if and only if $K/\mathbb{Q} \textnormal{ and } L/\mathbb{Q}$  are linearly disjoint.
\end{proposition}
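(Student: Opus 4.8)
The plan is to prove both implications directly, using the structure of $S_n$ acting on the cosets of a point stabilizer. Write $\widetilde{K}$ for the Galois closure of $K/\mathbb{Q}$, set $G = \gal(\widetilde{K}/\mathbb{Q}) \cong S_n$, and let $H = \gal(\widetilde{K}/K)$, which is a point stabilizer in the natural degree-$n$ action, hence a copy of $S_{n-1}$ and a maximal subgroup of $G$. For the easy direction, if $K$ and $L$ are linearly disjoint over $\mathbb{Q}$ then $[KL:\mathbb{Q}] = n^2 > n$, so $K \not\cong L$ (an isomorphism $K \cong L$ would force $K = L$ inside a common algebraic closure once we note $L$ embeds into $\widetilde K$... actually more carefully: $K \cong L$ as fields means they have the same Galois closure and the same degree, and if $K \cong L$ we could take $L = K$ up to the choice of embeddings, giving $KL = K$ of degree $n$, contradicting $n^2 = [KL:\mathbb{Q}]$). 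So the substantive content is the forward implication.

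For the hard direction, assume $K \not\cong L$; I want to show $K$ and $L$ are linearly disjoint over $\mathbb{Q}$, equivalently that $[KL:\mathbb{Q}] = n^2$. Pass to a common Galois extension $\Omega$ (e.g. the compositum of the Galois closures) and consider $M = \widetilde K \cap L$ and more generally the field $KL$. The key structural fact is that since $L/\mathbb{Q}$ has no intermediate fields, $L$ is either equal to $\mathbb{Q}$-inside-$\widetilde K$-considerations or is linearly disjoint from $\widetilde K$: indeed $L \cap \widetilde K$ is an intermediate field of $L/\mathbb{Q}$, so it is either $\mathbb{Q}$ or $L$. If $L \cap \widetilde K = L$ then $L \subseteq \widetilde K$; since $\widetilde{K}/\mathbb{Q}$ has Galois group $S_n$ and the only transitive $S_n$-actions... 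I would argue that a degree-$n$ subfield of $\widetilde K$ with no intermediate subfields corresponds to a maximal subgroup of index $n$ in $S_n$, and for $n \neq 6$ the only such is a point stabilizer $S_{n-1}$, whose fixed field is a conjugate of $K$, forcing $L \cong K$, contradiction. (The $n = 6$ exotic $S_5 \subset S_6$ case would need a separate remark — this is the main obstacle, see below.) Hence $L \cap \widetilde K = \mathbb{Q}$, so $L$ and $\widetilde K$ are linearly disjoint over $\mathbb{Q}$ (here using that $\widetilde K/\mathbb{Q}$ is Galois, so linear disjointness is equivalent to trivial intersection), and in particular $L$ and $K \subseteq \widetilde K$ are linearly disjoint.

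The step I expect to be the main obstacle is handling the exceptional outer behavior of $S_6$ and, more generally, pinning down that "$L \subseteq \widetilde K$, degree $n$, no intermediate fields" forces $L \cong K$. The clean way is the correspondence between intermediate fields of $\widetilde K / \mathbb{Q}$ and subgroups of $S_n$: a degree-$n$ field with no proper intermediate extension corresponds to an index-$n$ maximal subgroup of $S_n$. For $n \ge 5$, $n \ne 6$, the index-$n$ subgroups of $S_n$ are exactly the point stabilizers (this is classical), so their fixed fields are exactly the $\mathbb{Q}$-conjugates of $K$, giving $L \cong K$. For small $n$ ($n \le 4$) and for $n = 6$ one either invokes the known classification of index-$n$ subgroups directly, or observes that the exotic index-$6$ subgroup of $S_6$ is itself isomorphic to $S_5$ acting transitively, and its fixed field is a sextic field whose Galois closure still has group $S_6$ — one must then check separately whether that field can be isometric-free of being conjugate to $K$; most likely the statement is intended for $n$ in the generic range or the authors absorb the exceptional cases into the hypothesis. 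I would flag this and, if necessary, add the hypothesis or a short lemma citing the subgroup classification. Modulo that point, the proof is: intermediate-field-free $\Rightarrow$ $L \cap \widetilde K \in \{\mathbb{Q}, L\}$; rule out $L$ by the subgroup count plus $K \not\cong L$; conclude $L \cap \widetilde K = \mathbb{Q}$, hence linear disjointness.
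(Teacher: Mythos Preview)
Your overall strategy matches the paper's: reduce to showing $L \not\subset \widetilde{K}$ via the dichotomy $L \cap \widetilde{K} \in \{\mathbb{Q}, L\}$ (using that $L/\mathbb{Q}$ has no intermediate fields and $\widetilde{K}/\mathbb{Q}$ is Galois), then rule out $L \subset \widetilde{K}$ by the classification of index-$n$ subgroups of $S_n$. Your ``easy'' direction is argued via degree counting rather than the paper's tensor-product argument, but both are fine. One minor remark: the fact that all index-$n$ subgroups of $S_n$ are point stabilizers holds for every $n \neq 6$, not just $n \geq 5$, so your worry about small $n$ is unnecessary.

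The genuine gap is $n = 6$, which you correctly flag but do not resolve; the statement is \emph{not} intended to exclude this case, and no extra hypothesis is needed. The paper handles it as follows. If $L \subset \widetilde{K}$ and $K \not\cong L$, then $H_K = \gal(\widetilde{K}/K)$ and $H_L = \gal(\widetilde{K}/L)$ are non-conjugate index-$6$ subgroups of $S_6$, so they lie in the two distinct conjugacy classes of such subgroups. The key group-theoretic fact (cited from Schmidt, \emph{Subgroup lattices of groups}) is that for any representatives $H_1, H_2$ of these two classes one has $[S_6 : H_1 \cap H_2] = 36 = [S_6:H_1][S_6:H_2]$. Translating via the Galois correspondence, $[KL:\mathbb{Q}] = [K:\mathbb{Q}][L:\mathbb{Q}]$, i.e.\ $K$ and $L$ \emph{are} linearly disjoint after all --- so the assumption ``$K$ and $L$ not linearly disjoint'' was self-defeating. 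This closes the case cleanly; you should incorporate this argument rather than leaving $n=6$ open. (Your aside about checking whether the exotic sextic is ``isometric-free'' is off-target: this proposition is purely about linear disjointness and field isomorphism, with no trace forms in sight.)
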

\begin{proof}
{\it ``if'' :} Suppose $K$ and $L$ are linearly disjoint, then $K \not \cong L$. Otherwise,
\[K\cong L \cong \mathbb{Q}[X]/(p(X))\]
for some $p(X) \in \mathbb{Q}[X]$ and tensoring with $L$ we get $K\otimes_{\mathbb{Q}} L \cong L[X]/(p(X)) $
which is not a field, as $p(X)$ has a root in $L$.\\

{\it ``Only if'' :} Suppose $ K \not \cong L$. To prove that $K$ and $L$ are linearly disjoint it is enough to show that so are $\widetilde K$ and $L$. Since $L/\mathbb{Q}$ has no intermediate extensions and $\widetilde{K}/\mathbb{Q}$ is Galois, then $L/\mathbb{Q}$ and $\widetilde{K}/\mathbb{Q}$ are linearly disjoint if and only if $L\cap \widetilde{K}= \mathbb{Q}$, i.e., if and only if $L \not \subset \widetilde{K}$.\\

\noindent Now, if $n \neq 6$, then every subgroup of $\textrm{S}_n$ of index $n$ is conjugate to $H:=\{ \sigma \in \textrm{S}_n : \sigma(1)=1\}$ (see \cite[Lemma 7.8.5]{RSchSubgr}). Therefore, in that case  $L \not \subset \widetilde{K}$, otherwise, the subgroups  $H_L:=\textrm{Gal}(\widetilde{K}/L)$ and $H_K:=\textrm{Gal}(\widetilde{K}/K)$ of index $n$ in $\textrm{Gal}(\widetilde{K}/\mathbb{Q})\cong S_n$  would be conjugate, contrary to our assumption.\\

\noindent If $n=6$ it turns out, see \cite[Lemma 7.8.6]{RSchSubgr}, that for the only two conjugacy classes $\Delta_1$ and $\Delta_2$ of index $6$ in $\textrm{S}_6$, each pair of representatives $H_1\in \Delta_1$ and $H_2 \in \Delta_2$ satisfy \[36=[\textrm{S}_6:H_1\cap H_2]=[\textrm{S}_6:H_1][\textrm{S}_6:H_2]=6 \cdot 6.\] If $K$ and $L$ are not linearly disjoint then, arguing as above, $L  \subset \widetilde{K}$ and since $K$ and $L$ are  not isomorphic the groups $H_L$ and $H_K$ belong to the two different conjugacy classes of index $6$ in $\textrm{Gal}(\widetilde{K}/\mathbb{Q})\cong S_6$. By the group theoretic fact above we conclude that $[KL:\Q]=[L:\Q][K:\Q]$ which contradicts that assumption that $K$ and $L$ are not linearly disjoint.
\end{proof}

\begin{corollary}\label{FundDiscLinDisj}
   Let $K,L$ be degree $n$, $S_{n}$ number fields. Then,
   $K \not \cong L$ if and only if $K/\mathbb{Q} \textnormal{ and } L/\mathbb{Q} \textnormal{ are linearly disjoint}$
\end{corollary}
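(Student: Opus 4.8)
The plan is to deduce this directly from Proposition \ref{SnDisjoint}, so the only real work is to verify that the extra hypothesis of that proposition—that $L/\mathbb{Q}$ has no intermediate extensions—is automatic when $L$ is a degree $n$, $S_n$ number field. First I would recall that "$L$ is $S_n$" means $\mathrm{Gal}(\widetilde{L}/\mathbb{Q}) \cong S_n$ acting on the $n$ embeddings of $L$, equivalently that $H_L := \mathrm{Gal}(\widetilde{L}/L)$ is a point stabilizer in $S_n$ acting on $\{1,\dots,n\}$, a subgroup of index $n$ and order $(n-1)!$. Intermediate extensions $\mathbb{Q} \subseteq M \subseteq L$ correspond, by the Galois correspondence, to subgroups $H_L \subseteq G \subseteq S_n$; so "$L$ has no intermediate extensions" is exactly the assertion that the point stabilizer $H = \{\sigma \in S_n : \sigma(1) = 1\}$ is a maximal subgroup of $S_n$.

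Next I would justify that maximality. For $n \ge 2$ the natural action of $S_n$ on $\{1,\dots,n\}$ is primitive (it is even $2$-transitive), and a point stabilizer of a primitive action is by definition a maximal subgroup; equivalently, any $G$ with $H \subsetneq G \subseteq S_n$ acts transitively on $\{1,\dots,n\}$ while fixing the block structure imposed by containing $H$, forcing $G = S_n$. (The degenerate case $n = 1$ is vacuous, and for $n = 2$ there are simply no proper intermediate fields.) Hence every degree $n$, $S_n$ number field $L$ satisfies the hypothesis of Proposition \ref{SnDisjoint}.

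Having checked this, the corollary is immediate: given two degree $n$, $S_n$ number fields $K, L$, the field $K$ has $\mathrm{Gal}(\widetilde{K}/\mathbb{Q}) \cong S_n$ and the field $L$ has no intermediate extensions by the previous paragraph, so Proposition \ref{SnDisjoint} applies verbatim and yields $K \not\cong L$ if and only if $K/\mathbb{Q}$ and $L/\mathbb{Q}$ are linearly disjoint. I do not anticipate any genuine obstacle here; the one point meriting a sentence of care is the primitivity/maximality of the point stabilizer (including the special role of $n = 6$, which is harmless because both classes of index-$6$ subgroups of $S_6$ are still maximal—primitivity of the degree-$6$ action is what matters, not the transitive-action uniqueness used inside the proof of Proposition \ref{SnDisjoint}).
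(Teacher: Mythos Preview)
Your proposal is correct and follows essentially the same route as the paper: the paper's proof simply cites that one-point stabilizers are maximal subgroups of $S_n$ (referencing Dixon--Mortimer), uses the Galois correspondence to conclude $L/\mathbb{Q}$ has no intermediate extensions, and then invokes Proposition~\ref{SnDisjoint}. Your argument fleshes out the maximality via primitivity/$2$-transitivity rather than by citation, but the structure is identical.
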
  
   \begin{proof}
   Since one point stabilizers are maximal subgroups of $S_{n}$, see \cite[I, Corollary 1.5A]{dixon}, it follows from Galois correspondence that $K/\Q$ and $L/\Q$ have no intermediate extensions. The result follows from Proposition \ref{SnDisjoint}. \end{proof}

\section{Casimir elements associated to the trace pairing over number fields}\label{CasimiroYLosNumberFields}

In this section we focus our attention on questions about the integrality of Casimir values associated to the trace pairing. The strategy here is a game of local vs global, where we use in our favor that we know well the local behavior of the integral trace for tame extensions (see \cite[\S3]{Man9}.)

\subsection{Local considerations}

Here we use the standard decomposition of the trace as local traces to obtain information on Casimir pairings from their local counterparts.

\subsubsection{$p$-integrality of Casimir Pairings.}

\begin{lemma}\label{LocalTraces}
Let $K/\Q_p$ and $L/\Q_p$ be  finite field extensions with ramification indexes $e:=e(K)$ and $e(L)=:e'$, and different ideals $\mathcal{D}_K$ (resp. $\mathcal{D}_L$). Suppose $\sigma$ and $\tau$ are $\Q_p$-embedings  of $K$ and $L$ into $\overline{\Q}_p$, and $\psi: K \rightarrow L$ is a $\Q_p$-linear map taking $O_K$ into $O_L$ and $\mathcal{D}_K^{-1}$ into $\mathcal{D}_L^{-1}$.
If either both extensions are tamely ramified or $e=e'=2$, then the element \[e p^{1-\left(\frac{1}{e}+\frac{1}{e'}\right)} \langle \psi^*(\tau), \sigma \rangle_{{\rm tr}_{K/\Q_p}}\] is $p$-integral. If $K$ is unramified, then $\langle \psi^*(\tau), \sigma \rangle_{{\rm tr}_{K/\Q_p}}$ is $p$-integral.
\end{lemma}

\begin{proof}
Let $F$ be the maximal unramified subextension of $K$. First let us assume that $L/\Q_p$ and $K/\Q_p$ are tamely ramified. Then by \cite[Lemma 3.1]{Man9}, there exists a uniformizer $\pi$ of $O_K$ and a unit $\mu_F\in O_F^*$ such that $\pi^e=\mu_Fp$ and $O_K=O_F[\pi]$. This implies that ${\rm Tr}_{K/F}(\pi^m)=0$ whenever $e\nmid m$, hence the dual $F$-basis with respect to the trace pairing ${\rm tr}_{K/F}$ of the integral basis $ \{1,\ldots,\pi^i,\ldots \pi^{e-1}\}$ for $O_K$ over $O_F$ is $\{\frac{1}{e},\ldots,\frac{1}{e}\pi^{-i}, \ldots, \frac{1}{e}\pi^{-(e-1)}\}$. \\

Let $\{w_1,\ldots,w_f\}$ be any integral basis for $O_F$ over $\Z_p$ with dual basis $\{w_1^*,\ldots,w_f^*\}$ with respect to the trace pairing ${\rm tr}_{F/\Q_p}$. The set $\{w_i\pi^j:1\leq i\leq f, 0\leq j<e \}$ is an integral basis for $O_K$ over $\Z_p$ with dual basis $\{\frac{1}{e}w_i^*\pi^{-j}:1\leq i\leq f, 0\leq j<e \}$ with respect to trace pairing ${\rm tr}_{K/\Q_p}$. Computing the pairing in basis  we get the sum
\begin{equation*}
 \langle \psi^*(\tau), \sigma \rangle_{{\rm tr}_{K/\Q_p}}=\sum_{i,j} \tau\psi(\frac{w_i^*}{e\pi^j})\sigma(w_i\pi^j).   \tag{*}
\end{equation*}
 When $j=0$, the summand $\tau\psi(\frac{w_i^*}{e}) \sigma(w_i)$ is already $p$-integral. This is because $\psi$ maps $O_K$ into $O_L$ and $F/\Q_p$ is unramified, so each $w_i^* \in \mathcal{D}_F^{-1}=O_F$. While if $j\geq 1$, since by hypothesis $x:=\psi(\frac{w_i^*}{e\pi^j}) \in \mathcal{D}_L^{-1}$ and $L$ is tame, $v_p(x) \geq -\frac{e'-1}{e'}$. Thus
\[v_p( x\sigma(w_i\pi^j) )\geq-\frac{e'}{e'-1}+\frac{j}{e}\geq  -\frac{e'-1}{e'}+\frac{1}{e}=-1+\left(\frac{1}{e}+\frac{1}{e'}\right).\]
Hence $v_p(\langle \psi^*(\tau) ,\sigma\rangle ) \geq -1+\left(\frac{1}{e}+\frac{1}{e'}\right)$, as claimed. \\

If now $e=e'=2$, and of the extensions is wild, then $p=2$ and $O_K$ still has basis  of the form $\{1,\pi\}$ over $O_F$ such that $\pi^2 \in O_F$; but now $\pi$ is either a uniformizer or a unit. In the former case the same argument above applies, with the adjustment that we need to multiply by $e=2$ and replace the tameness of $L$ by the inequality $v_2(\mathcal{D}_L)\leq \frac{e'-1}{e'}+v_2(e')=\frac{3}{2}$; in the latter case $(*)$ directly shows that $2\langle \psi^*\tau, \sigma \rangle$ is $2$-integral. \\

Finally, if $K=F$ is unramified, then $(*)$ holds with $e=1$ and shows that $\langle \psi^*(\tau), \sigma \rangle$ is $p$-integral.
\end{proof}

\begin{proposition}\label{pInt}
Let $K,L$ be  number fields of degree $n$, and let  $\phi: (K,{\rm tr}_{K/\mathbb{Q}}) \rightarrow (L,{\rm tr}_{L/\mathbb{Q}})$ be an isometry such that $\phi(O_K) = O_L$. Consider the matrix $U$ representing the $\Q$-linear map
\[ \phi^*: {\rm Hom}_{\Q}(L,\overline{\Q}) \rightarrow  {\rm Hom}_{\Q}(K,\overline{\Q})\]
in the bases that consists of the $\Q$-embeddings of $K$ and $L$, and let $p$ be a rational prime ramified in $K$.  
\begin{enumerate}[(i)]
    \item  If $L/\Q$ and $K/\Q$ are both tamely ramified at $p$, then $p^{1-\frac{2}{e}}U$ has $p$-integral entries. Where $e$ is the largest ramification index of a prime in $L$ or $K$ dividing $p$.
    \item If $p=2$, and $L/\Q$ and $K/\Q$ are both tamely ramified at $2$ except for possibly some wild primes with ramification index $2$, then $2U$ has $2$-integral entries.
\end{enumerate}
\end{proposition}

\begin{proof}
Since we are only concerned with $p$-integrality, scalar extension compatibility (Proposition \ref{SomeProp}(ii)) allows us to replace $U$ with the matrix $U_p$ representing the $\overline{\Q}_p$-linear map
\[ (\phi\otimes 1)^*: {\rm Hom}_{\Q_p}(L\otimes \Q_p,\overline{\Q}_p) \rightarrow  {\rm Hom}_{\Q_p}(K\otimes \Q_p,\overline{\Q}_p)\]
in the bases that consists of $\Q_p$-algebra morphisms of $K\otimes \Q_p$ and $L \otimes \Q_p$  into $\overline{\Q}_p$. As noted in Remark \ref{GaloisEtale}, the decomposition of $\Q_p$-\'etale algebras $K\otimes \Q_p = \prod_{v \mid p} K_v$ and  $L\otimes \Q_p = \prod_{w \mid p} L_w$ induces a orthogonal decomposition of $\overline{\Q}_p$-spaces


\[{\rm
Hom }_{\Q_p}(K\otimes \Q_p ,\overline{\Q}_p)= \bigot_{v\mid p} {\rm
Hom }_{\Q_p}(K_v,\overline{\Q}_p)\]
\[{\rm
Hom }_{\Q_p}(L\otimes \Q_p,\overline{\Q}_p)= \bigot_{w\mid p} {\rm
Hom }_{\Q_p}(L_w,\overline{\Q}_p).\]

This shows that it is enough to consider the pairings $\langle (\phi \otimes 1)^*(\tau)|_{K_v},\sigma \rangle_{{\rm tr}_{K_v/\Q_p}}$ for any $\sigma \in  {\rm
Hom }_{\Q_p}(K_v,\overline{\Q}_p)$ and $\tau \in  {\rm
Hom }_{\Q_p}(L_w,\overline{\Q}_p)$: Since for $v' \neq v$, $\sigma|_{K_{v'}}=0$ the corresponding entry of $U_p$ is 
\[ \langle (\phi \otimes 1)^*(\tau),\sigma \rangle_{{\rm tr}_{K \otimes \Q_p/\Q_p}}=\sum_ {v' \mid p} \langle (\phi \otimes 1)^*(\tau)|_{K_{v'}},\sigma|_{K_{v'}} \rangle_{{\rm tr}_{K_{v'}/\Q_p}}= \langle (\phi \otimes 1)^*(\tau)|_{K_v},\sigma \rangle_{{\rm tr}_{K_v/\Q_p}}.\] 
Now since $\phi\otimes 1$ is an integral isometry it must map the dual of $O_K \otimes \Z_p= \prod_{v|p} O_{L_v}$ in $K\otimes \Q_p$ into the dual of $O_L\otimes \Z_p= \prod_{w|p} O_{L_v}$ in $L \otimes \Q_p$, i.e.,
 \[\phi\otimes 1:\prod_{v|p} \mathcal{D}_{K_v}^{-1} \rightarrow  \prod_{w|p} \mathcal{D}_{L_w}^{-1}\]
It follows that the local extensions $K_v/\Q_p$, $L_w/\Q_p$ and the map \[\psi:K_v \xrightarrow[]{(\phi \otimes 1)|_{K_v}} \prod_{w' \mid p} L_{w'} \rightarrow L_w\] satisfy the hypothesis of Lemma \ref{LocalTraces}. Since $(\phi \otimes 1)^*$ is an isometry and the same considerations apply to $\phi^{-1}$, the symmetry $\langle (\phi\otimes 1)^* (\tau), \sigma  \rangle=\langle  \tau, (\phi^{-1}\otimes 1)^*(\sigma)  \rangle$ allows to switch $K_v$ and $L_w$ if necessary and thus reduce to the case when $e(K_v) \leq e(L_w)$. Applying the estimates in Lemma \ref{LocalTraces} to  \[\langle (\phi \otimes 1)^*\tau|_{K_v},\sigma|_{K_v} \rangle_{{\rm tr}_{K_v/\Q_p}}=\langle \psi^*(\tau), \sigma \rangle_{{\rm tr}_{K_v/\Q_p}}\]  finishes the proof in each case. 

\end{proof}

\subsection{Integrality of Casimir elements}

Here we give some conditions to show how far the casimir pairing associated to an isometry of the trace pairings of a couple of number fields is integral.

\begin{definition}
Let $K,L$ be number fields with fixed embeddings $\iota_K$ and $\iota_L$ into $\C$. Given $\phi:K\rightarrow L$ a $\mathbb{Q}$-linear map  we denote by $c_{\phi}$ the Casimir element given by \[c_{\phi}:=\langle \iota_K, \iota_L \phi \rangle_{{\rm tr}_{K/\Q}}\] and we define $M_{\phi}$ to be the least positive integer $m$ such that $ m c_{\phi}$  is an algebraic integer.
\end{definition}

\noindent The {\it radical} of a non zero integer $m$, i.e., the product of all prime divisors of $m$, is denoted by ${\rm rad}(m)$.

\begin{theorem}\label{RadDiscriminantCasimirAlgInteger}
Let $K,L$ be  number fields of degree $n$, and let  $\phi: (K,{\rm tr}_{K/\mathbb{Q}}) \rightarrow (L,{\rm tr}_{L/\mathbb{Q}})$ be an isometry such that $\phi(O_K) = O_L$. Then,
\begin{enumerate}[(i)]
    \item The integer $M_{\phi}$ divides $d_s(K,L)$.
    \item If $K$ is tamely ramified, then $M_{\phi} \mid {\rm rad} (d_s(K,L))$.
    \item If $K$ and $L$ are only wildly ramified at $2$ and $e_2(K)=2=e_2(L)$, then $M_{\phi} \mid {\rm rad} (d_s(K,L))$.
    
\end{enumerate}

\end{theorem}

\begin{proof}

Let $(\alpha_1,\ldots,\alpha_n)$ be an integral basis of $K$. Recall that \[c_{\phi}=\sum_{i=1}^{n} \alpha_i^{*} \phi(\alpha_i).\]
Since each $\alpha_i^{*}$ belongs to the codifferent ideal $\mathcal{D}_K^{-1}$ and $[\mathcal{D}_K^{-1}:O_K]=d_K$, we have that $d_Kc_{\phi}$ is integral. Let $p$ be a rational prime. If $p$ is not ramified in $K$ then $c_{\phi}$ is $p$ integral since $d_Kc_{\phi}$ is, in particular $d_{s}c_{\phi}$ is $p$ integral. Suppose that $p$ is ramified. 

\begin{itemize}
    \item If $p\mid d_f(K,L)$, then $e_{p}(K)=e_p(L)=2$. Hence, by Proposition  \ref{pInt}$(i)$  $c_{\phi}$ is $p$-integral in particular $d_{s}c_{\phi}$ is as well. 
    \item If $p \nmid d_f(K,L)$ then $d_{s}c_{\phi}=\frac{d_{K}c_{\phi}}{d_{f}}$ is $p$-integral since $d_{K}c_{\phi}$ is integral. 
\end{itemize}

\noindent To prove the $(ii)$ suppose that the ramification of $p$ in  $K$ is tame. Let $\mathfrak{p}$ be a prime of $K$ lying over $p$. Then \[v_{\mathfrak{p}}(p\mathcal{D}_K^{-1})=e(\mathfrak{p}|p)-(e(\mathfrak{p}|p)-1)=1\geq0.\] It follows that $pc_{\phi}$ is $p$-integral, and we see that ${\rm rad}(d_s(K,L)) c_{\phi}$ is integral. Likewise, if $p=2$ and $e_2(K)=e_2(L)=2$, then Proposition \ref{pInt}$(ii)$ shows that $2 c_{\phi}$ is $2$-integral, proving part $(iii)$.   

\end{proof}

\begin{proposition}\label{CasimirNotZero}
Let $K$ be a totally real number field of degree $n$, and let $L$ be a number field such that the extensions $L/\Q$ and $K/\Q$ are linearly disjoint. Suppose that there is an isometry \[\phi: (O_K,{\rm tr}_{K/\Q}) \rightarrow (O_L,{\rm tr}_{L/\Q}).\] Then, \[\sqrt{n}\leq M_{\phi}.\] Moreover, in such a case, the equality holds if and only if $M_{\phi}c_{\phi}=\pm 1$.
\end{proposition}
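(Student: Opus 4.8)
The plan is to work with a complex embedding to convert statements about the algebraic integer $M_\phi c_\phi$ into statements about an orthogonal matrix, and then extract the inequality by comparing a single entry of that matrix with the norm of $c_\phi$. Concretely, fix a complex embedding $\iota_E\colon E\hookrightarrow\mathbb{C}$ where $E=KL$, and let $\sigma\colon K\hookrightarrow\mathbb{C}$, $\tau\colon L\hookrightarrow\mathbb{C}$ be its restrictions; then $\iota_L\phi$ restricted along $\iota_E$ is just $\tau\phi$, so $\iota_E(c_\phi)=\langle\sigma,\tau\phi\rangle_{\mathrm{tr}_{K/\mathbb{Q}}}$ (this is the computation in Remark \ref{Remarky}). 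By Corollary \ref{LinDisjOrthoMatr}, provided $K$ and $L$ are linearly disjoint over $\mathbb{Q}$, there is an indexing of the $F$-embeddings $\theta_{ij}\colon E\hookrightarrow\Omega$ making $U=(\theta_{ij}(c_\phi))$ orthogonal; and if $K\cong L$ the claim is trivial since $\phi$ being a self-isometry forces $c_\phi$ to be... actually one should handle that case too, but since $K$ is totally real and $S_n$-ish we expect to reduce to the linearly disjoint case via Corollary \ref{FundDiscLinDisj} (if $c_\phi\ne 0$, then $K\not\cong L$, hence linear disjointness). I would first argue: if $c_\phi\neq 0$ then $K\not\cong L$ — otherwise $E=K$, $\phi$ is a self-isometry of $(K,\mathrm{tr}_{K/\mathbb{Q}})$, and one checks $c_\phi$ would have to have absolute value bounded in a way incompatible with being a nonzero algebraic integer unless... hmm, actually this needs care; more likely the intended route is simply to invoke linear disjointness as a hypothesis-free consequence when $c_\phi\neq 0$ using that $K$ is totally real so complex conjugation acts trivially.

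The main body of the argument: since $K$ is totally real, every $\theta_{ij}(c_\phi)$ is a real algebraic number, and by Theorem \ref{RadDiscriminantCasimirAlgInteger}/Lemma \ref{BoundOnM}, $M_\phi c_\phi$ is an algebraic integer, so every conjugate $M_\phi\theta_{ij}(c_\phi)$ is a real algebraic integer. Now the orthogonality of $U$ gives, for the row indexed by a fixed $i$, that $\sum_j \theta_{ij}(c_\phi)^2 = 1$. The number of columns $j$ is $[KL:K]=[E:K]$ (by linear disjointness, $[E:\mathbb{Q}]=[K:\mathbb{Q}][L:\mathbb{Q}]$, and the $\theta_{ij}$ with $i$ fixed are exactly the extensions of $\sigma_i$ to $E$, of which there are $[L:\mathbb{Q}]=[E:K]$). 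So $1=\sum_{j=1}^{[KL:K]}\theta_{ij}(c_\phi)^2$. Multiplying by $M_\phi^2$: $M_\phi^2 = \sum_{j=1}^{[KL:K]} (M_\phi\theta_{ij}(c_\phi))^2$. Each summand is the square of a nonzero real algebraic integer (nonzero because $c_\phi\neq 0$ and all its conjugates are nonzero — here I need that $c_\phi\neq 0$ implies no conjugate vanishes, which holds because the conjugates are a Galois orbit). A nonzero algebraic integer has at least one conjugate of absolute value $\geq 1$, but here I want more: I want that the whole sum of these squares, which is the \emph{rational integer} $M_\phi^2$, is $\geq [KL:K]$. That follows if each $(M_\phi\theta_{ij}(c_\phi))^2\geq 1$ — but that's false in general. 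The correct observation: $M_\phi^2c_\phi^2$ is a \emph{totally positive} algebraic integer (all conjugates real and $\geq 0$, in fact $>0$), and $\sum_j M_\phi^2\theta_{ij}(c_\phi)^2 = M_\phi^2 \in\mathbb{Z}$; but also, since $K$ is totally real the full collection of numbers $\{M_\phi\theta_{ij}(c_\phi)\}$ over \emph{all} $i$ forms a union of complete Galois orbits. Summing the row identity over all $i$ and dividing appropriately, the trace (in the arithmetic sense) of $M_\phi^2 c_\phi^2$ is $M_\phi^2 [K:\mathbb{Q}]$; since $M_\phi^2 c_\phi^2$ is a nonzero totally positive algebraic integer of degree dividing $[K:\mathbb{Q}][L:\mathbb{Q}]$, its trace down to $\mathbb{Q}$ over $K$ is a totally positive algebraic integer in... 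I think the cleanest path is: $\sum_{j=1}^{[KL:K]}(M_\phi\theta_{ij}(c_\phi))^2$ is a \emph{rational integer} (it equals $M_\phi^2$) that is a sum of $[KL:K]$ squares of nonzero algebraic integers, hence is $\geq$ the number of Galois-conjugacy-orbit representatives times... — and the bottom line, using that a sum of $m$ squares of nonzero \emph{algebraic integers} whose total is a rational integer is at least $m$ when these algebraic integers' minimal polynomial arithmetic forces the sum of any complete set of squared conjugates to be $\geq$ (number of conjugates): this is exactly the statement that the arithmetic mean of the squared conjugates of a nonzero algebraic integer is $\geq 1$ (AM–GM on the $|{\rm Norm}|\geq 1$), giving $M_\phi^2\geq [KL:K]$, i.e. $\sqrt{[KL:K]}\leq M_\phi$.

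For the equality case: equality $M_\phi^2=[KL:K]$ forces every $(M_\phi\theta_{ij}(c_\phi))^2$ to equal $1$ (AM–GM is an equality iff all squared conjugates are equal, and they're algebraic integers averaging to $1$, so each is $1$), hence $M_\phi\theta_{ij}(c_\phi)=\pm 1$ for all $i,j$; in particular $M_\phi c_\phi$ is an algebraic integer all of whose conjugates are $\pm 1$, i.e. a root of unity times... well, it's $\pm 1$ if $E$ has no roots of unity beyond $\pm 1$ — but we don't want that hypothesis here, so instead: $M_\phi c_\phi=\zeta$ with $|\sigma(\zeta)|=1$ for all $\sigma$ and $\zeta$ a real algebraic integer (as $K$ totally real) equal to $\pm1$ under $\iota_E$, hence $M_\phi c_\phi=\pm 1$. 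Conversely if $M_\phi c_\phi=\pm 1$ then each $\theta_{ij}(c_\phi)^2=1/M_\phi^2$ and the row sum gives $[KL:K]/M_\phi^2=1$. I expect the \textbf{main obstacle} to be the AM–GM step — making precise that $\sum_{j=1}^{[KL:K]}\theta_{ij}(c_\phi)^2=1$ together with totality/integrality of $M_\phi c_\phi$ yields $M_\phi^2\geq[KL:K]$ with the stated equality condition — and secondarily the bookkeeping identifying the number of columns of $U$ in a fixed row with $[KL:K]$ and justifying the reduction to the linearly disjoint case when $c_\phi\neq 0$.
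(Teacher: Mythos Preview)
Your proposal has a genuine gap: the reduction to the linearly disjoint case cannot be justified under the hypotheses of the proposition. You attempt to argue that $c_\phi\neq 0$ forces $K\not\cong L$ and then invoke Corollary~\ref{FundDiscLinDisj}, but neither step is available. First, $c_\phi\neq 0$ does \emph{not} imply $K\not\cong L$: for $K=L$ and $\phi=\mathrm{id}$ one has $c_\phi=\sum_i\alpha_i^*\alpha_i$, and $\mathrm{tr}_{K/\Q}(c_\phi)=n\neq 0$. Second, even granting $K\not\cong L$, the proposition does not assume $K$ is an $S_n$-field, so Corollary~\ref{FundDiscLinDisj} does not apply and linear disjointness need not hold. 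Consequently the bijection underlying Corollary~\ref{LinDisjOrthoMatr}, on which your row-sum identity with exactly $[KL:K]$ terms depends, is unavailable in general.

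The paper's proof bypasses this entirely by using Corollary~\ref{TheMatrixU} directly, with no disjointness hypothesis. The $n\times n$ matrix $U=(c_{ij})$ is orthogonal, so taking the trace of $(M_\phi U)(M_\phi U)^t=M_\phi^2 I$ gives $\sum_{i,j}(M_\phi c_{ij})^2=M_\phi^2 n$. Since $E=KL$, every embedding $\theta\colon E\hookrightarrow\C$ is determined by its restrictions to $K$ and $L$, so the map $\theta\mapsto(\theta\circ\iota_K,\theta\circ\iota_L)$ is injective; hence the conjugates $\theta\bigl((M_\phi c_\phi)^2\bigr)$ appear among the nonnegative (as $KL$ is totally real) terms $(M_\phi c_{ij})^2$, yielding the \emph{inequality}
\[
\mathrm{tr}_{KL/\Q}\bigl((M_\phi c_\phi)^2\bigr)\;\leq\; M_\phi^2\, n
\]
rather than the equality you obtain only under linear disjointness. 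Your AM--GM step then finishes the argument exactly as you describe: $[KL:\Q]\leq \mathrm{tr}_{KL/\Q}\bigl((M_\phi c_\phi)^2\bigr)\leq M_\phi^2 n$, hence $[KL:K]\leq M_\phi^2$, with equality forcing $(M_\phi c_\phi)^2=1$. So the core AM--GM idea in your sketch is correct; the only missing ingredient is replacing the disjointness-dependent equality by this inequality, which makes the reduction you struggled with unnecessary.
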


\begin{proof}
Since the isometry class of the integral trace determines the {\rm discriminant}, the signature, and the degree of a field we have that $L$ is totally real with the same discriminant and degree as $K$.  Let $m$ be a positive integer such that $\alpha=mc_{\phi}$ is an algebraic integer. Let $U=(c_{ij})$ where $c_{ij}:=\langle \sigma_i, \tau_j\phi \rangle_{{\rm tr}_{K/F}}$, and  $\{\sigma_i\}$, $\{\tau_i\}$ are the sets of complex embeddings of $K$ and $L$. Since $U$ is orthogonal,  thanks to Corollary \ref{TheMatrixU} $(iii)$, it follows that if we let $N=mU$ then $NN^{t}=m^2I$. Taking traces on both sides we obtain
 \[ \sum_{i,j} (mc_{ij})^2= {\rm tr}(NN^{t})=m^2n\]
 Since $K/\Q$ and $L/\Q$ are linearly disjoint, it follows from the construction of Corollary \ref{LinDisjOrthoMatr}, that the $n\times n$ matrix built by the conjugates of $\alpha$ in $KL$, up to permutation of its entries, is equal to $N$. In particular, \[\tr_{KL/\Q}(\alpha^2)=\sum_{i,j} (mc_{ij})^2.\] Since $\alpha$ is integral, and non-zero since $U$ is a non-zero matrix, $N_{KL/\mathbb{Q}}(\alpha^2)\geq 1$. Thanks to the  AM-GM inequality,
 \[n^2=[KL:\Q] \leq \tr_{KL/\Q}(\alpha^2)= m^2 n. \]
 Hence $\sqrt{n}\leq m$ and the equality holds if and only if $\alpha^2=1$.
\end{proof}

\section{Arithmetic consequences}\label{LosResultados}

Two useful and closely related quadratic invariants  have been derived from the integral trace form. They are the trace-zero form and the shape of a number field (an invariant with a more geometric flavor). To define them, let $K$ be a number field. The {\it trace-zero module} of $K$, denoted $O_K^0$ is
\[\{x\in O_K : {\rm tr}_{K/\mathbb{Q}}(x)=0\}.\]
This module and the quadratic form obtained by restricting the trace pairing to it (the trace-zero form) played a role in the study of the trace form for cubic fields in \cite{Man}, and in the work of Ellenberg-Venkatesh on asymptotics of number fields \cite{ev}. We will use it here to cover a crucial case of our main theorem.\\

\noindent The shape of $K$, denoted ${\rm Sh}(K)$, is the class of the lattice obtained by mapping 
\[O_K^\bot=\{x\in \Z+[K:\mathbb{Q}]O_K : {\rm tr}_{K/\mathbb{Q}}(x)=0\}\]
under the Minkowski embedding up to rotation, reflections and multiplication by scalars. When $K$ is totally real, we can equivalently describe ${\rm Sh}(K)$ as the isometry class of the quadratic module obtained by restricting the trace paring to $O_K^\bot$ up to multiplication by $\R^{\times}$. This invariant, introduced for cubic fields in \cite{DTerr}, has been recently studied for several different purposes (see \cite{BH,RH, Man8}).\\

\noindent The following proposition shows that for totally  fields number fields the integral trace form is the strongest of these three invariants.

\begin{lemma}\label{FromTodoTwoZero}
Let $K, L$ be two totally real number fields and suppose $\phi: (O_K,{\rm tr}_{K/\Q}) \rightarrow (O_L,{\rm tr}_{L/\Q})$ is an isometry. Then, $\phi(1)=\pm 1$. In particular, the restriction of $\phi$ induces an isometry between the integral trace zero parts of $K$ and $L$, and also between the shapes of $K$ and $L$.  

\end{lemma}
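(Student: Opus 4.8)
The plan is to extract the value $\phi(1)$ directly from the Casimir element $c_\phi = \langle \iota_K, \iota_L\phi\rangle_{{\rm tr}_{K/\Q}}$ and exploit that for totally real fields everything in sight is real. First I would observe that $\phi(1)$ lies in $O_L$ and that, since $\phi$ is a trace isometry, ${\rm tr}_{L/\Q}(\phi(1)^2) = {\rm tr}_{K/\Q}(1^2) = n = [K:\Q]$. So $\phi(1)$ is an algebraic integer, all of whose real conjugates $\sigma_j(\phi(1))$ satisfy $\sum_j \sigma_j(\phi(1))^2 = n$; by AM–GM its norm has $|N_{L/\Q}(\phi(1))| \le 1$, hence $\phi(1)$ is either $0$ or a root of unity in the totally real field $L$, i.e. $\phi(1) \in \{0, \pm 1\}$. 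It remains to rule out $\phi(1) = 0$.

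To eliminate $\phi(1)=0$ I would use that $1 \in O_K$ has ${\rm tr}_{K/\Q}(1\cdot 1) = n \ne 0$, while if $\phi(1)=0$ then ${\rm tr}_{L/\Q}(\phi(1)\phi(x)) = 0$ for every $x$, contradicting that $\phi$ preserves the trace pairing: take $x=1$ to get ${\rm tr}_{K/\Q}(1) = {\rm tr}_{L/\Q}(\phi(1)^2) = 0$, a contradiction since $n>0$ in characteristic zero. This already forces $\phi(1) = \pm 1$ and is in fact cleaner than the root-of-unity argument; I would present this as the main line and mention the norm bound only if a self-contained reason for integrality is wanted. (Note $\phi(1)\in O_L$ because $\phi(O_K)=O_L$.)

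For the ``in particular'' clauses: once $\phi(1) = \pm1$, replacing $\phi$ by $-\phi$ if necessary (an isometry of the same forms), we may assume $\phi(1) = 1$. Then $\phi$ is $\Q$-linear and sends $1 \mapsto 1$, so it maps $\Z \cdot 1 \subseteq O_K$ isometrically onto $\Z\cdot 1 \subseteq O_L$, and because it preserves the trace pairing it carries the orthogonal complement of $\Z\cdot1$ inside $O_K$ to that inside $O_L$. Concretely, for $x \in O_K$ we have ${\rm tr}_{L/\Q}(\phi(x)) = {\rm tr}_{L/\Q}(\phi(x)\phi(1)) = {\rm tr}_{K/\Q}(x)$, so $\phi$ restricts to an isometry $O_K^0 \xrightarrow{\sim} O_L^0$ between the trace-zero modules. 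The same identity applied to $\Z + nO_K$ (noting $\phi(\Z + nO_K) = \Z + nO_L$ since $\phi$ fixes $\Z$ and $\phi(O_K)=O_L$) shows $\phi$ carries $O_K^\bot$ isometrically onto $O_L^\bot$, giving the claimed isometry of shapes (for totally real fields the shape is exactly this isometry class, up to scaling, so a genuine isometry certainly suffices).

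I do not expect a serious obstacle here; the only mild subtlety is making sure $\phi(1)$ is genuinely integral (which is immediate from $\phi(O_K) = O_L$, part of the hypothesis) and handling the sign ambiguity, which is harmless since negation is an isometry. The heart of the matter is the short computation ${\rm tr}_{L/\Q}(\phi(1)^2) = n \ne 0$, which rules out the degenerate case in one line.
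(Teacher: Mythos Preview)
Your argument is correct and follows the same route as the paper: compute ${\rm tr}_{L/\Q}(\phi(1)^2)=n$, apply AM--GM to the (real, nonnegative) conjugates of $\phi(1)^2$ to force $\phi(1)^2=1$, then restrict $\phi$ to $O_K^0$ and to $O_K^\bot$ exactly as you describe.

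Two small expository points. First, the implication ``$|N_{L/\Q}(\phi(1))|\le 1$, hence $\phi(1)$ is $0$ or a root of unity'' is not valid on its own (units of a totally real field need not be roots of unity); what actually does the work---and what the paper states explicitly---is the \emph{equality} case of AM--GM, which forces every conjugate of $\phi(1)^2$ to equal $1$. Second, your remark that ruling out $\phi(1)=0$ is ``cleaner than the root-of-unity argument'' is misleading: showing $\phi(1)\neq 0$ (which is in any case immediate from injectivity of $\phi$) only eliminates one option and does not by itself give $\phi(1)=\pm 1$; the AM--GM step is still indispensable.
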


\begin{proof}
Because the fields have isometric integral trace forms, they must have the same degree, say $n$. Now let $\alpha:=\phi(1)\neq 0$. By hypothesis $\alpha^2$ is a totally positive algebraic integer such that ${\rm tr}_{L/\mathbb{Q}}(\alpha^2)={\rm tr}_{L/\mathbb{Q}}(\phi(1)^2)={\rm tr}_{K/\mathbb{Q}}(1^2)=n$. Hence, from the AM-GM inequality applied to the $n$ conjugates of $\alpha^2 \in L$ over $\mathbb{Q}$, we find that the equality in 
\[ 1 \leq  {\rm N}_{L/\mathbb{Q}}(\alpha^2)^{\frac{1}{n}}\leq \frac{{\rm tr}_{L/\mathbb{Q}}(\alpha^2)}{n}= 1\]
implies $\alpha^2\in \mathbb{Q}$ and thus $\alpha^2=1$. To prove the last claim, note that the isometry $\phi$ maps the elements in $O_K$ orthogonal to $1$ precisely to the elements in $\phi(O_K)=O_L$ orthogonal to $\phi(1)=\pm 1$, i.e., $\phi(O_K^0)=O_L^0$. Similarly, as $\phi(\Z+nO_K)=\phi(1)\Z+nO_L=\Z+nO_L$, we have $\phi(O_K^\bot)=O_L^\bot$. 
\end{proof}

We are now ready to prove the main results of the paper:

\begin{theorem}\label{ElGeneralisimo}
Let $K$ be a degree $n$ totally real $S_{n}$ number field and $L$ be any $S_n$ number field. Let $d_{s}(K,L)$ be the integer defined in \ref{LosEnterosQuePartenDisc}.
\begin{enumerate}[(i)]
    \item Suppose that $n \ge d_{s}(K,L)^{2}$. Then, \[(O_K,{\rm tr}_{K/\mathbb{Q}}) \cong  (O_L,{\rm tr}_{L/\mathbb{Q}}) \ \mbox{if and only if} \ K \cong L.\]
    
    \item If  $K$ does not have wild ramification, then the condition $n \ge d_{s}(K,L)^{2}$ can be improved to $n \ge {\rm rad}(d_{s}(K,L))^{2}$.
    
    \item  If  $K$ and $L$ do not have wild ramification except possibly for some primes with ramification index $2$ lying over $2$, then the condition $n \ge d_{s}(K,L)^{2}$ can also be improved to $n \ge {\rm rad}(d_{s}(K,L))^{2}$ .

\end{enumerate}

\end{theorem}

\begin{proof}
This is clear in degrees $n=1,2$, so let us suppose $n\geq3$. We show the non-trivial implication. Since $K$ and $L$ have isometric integral traces, they share the same degree, signature and discriminant.  If $K$ and $L$ are not isomorphic it follows from Corollary \ref{FundDiscLinDisj} that they are linearly disjoint. Let $\phi$ be an integral isometry, by Corollary \ref{LinDisjOrthoMatr}, there is an orthogonal matrix $U$ whose entries consist of the values $\theta(c_{\phi})$ where $\theta$ runs over the set
${\rm Hom}_{\Q-alg}(E, \C)$. In particular $c_{\phi}\not \in \Q$, otherwise, $\theta(c_{\phi})=c_{\phi}$ for all $\theta$ which is a contradiction since the matrix $U$ is invertible. Since $c_{\phi}$ is not rational it follows from Proposition \ref{CasimirNotZero} and from Theorem \ref{RadDiscriminantCasimirAlgInteger} that 
\[\sqrt{n}< M_{\phi}\leq d_s(K,L).\] The improvement on the upper bound of the above is obtained thanks to  parts $(ii)$ and $(iii)$ of Theorem \ref{RadDiscriminantCasimirAlgInteger}.
\end{proof}

The above together with some previous results on integral traces imply that the integral trace is a complete invariant for real fields with at worst quadratic ramification.

\begin{theorem}\label{TotallyRealFundDiscG}
Let $K,L$ be degree n, $S_n$ number fields. Suppose that $K$ is totally real and that the ramification index of every prime in $K$ and $L$ over $\Q$ is at most $2$. Then, $$(O_K,{\rm tr}_{K/\mathbb{Q}}) \cong  (O_L,{\rm tr}_{L/\mathbb{Q}}) $$ if and only if $K \cong L$.
\end{theorem}
\begin{proof}
This is trivial in degrees $n=1,2$, so let us suppose that $n\geq3$. Thanks to Theorem \ref{ElGeneralisimo}, the result follows for $n \ge 4$. If $n=3$, the restriction on the ramification indexes implies that the Galois closure  $\widetilde{K}$ of $K$ is unramified over $\Q(\sqrt{d_K})$, hence $d_K$ must be fundamental (see \cite{HasseC}). Thus, using Lemma \ref{FromTodoTwoZero} we see that the case $n=3$ follows from \cite[Theorem $6.5$]{Man}.
\end{proof}

\begin{corollary}\label{IntTraceCompleteInv}
The integral trace form is a complete invariant for totally real number fields of fundamental discriminant.
\end{corollary}

\begin{proof}
Since degree $n$ number fields of fundamental discriminant are $S_n$ number fields, see \cite{Tkondo}, and since having fundamental discriminant implies that ramification indices are at most 2, the result follows from Theorem \ref{TotallyRealFundDiscG}.
\end{proof}


\begin{theorem}\label{SqFDiff}
Let $K$ and $L$ be  totally real number fields with square free different ideal. Then any isometry $\phi: (K, {\rm tr}_{K/\mathbb{Q}}) \rightarrow (L, {\rm tr}_{L/\mathbb{Q}})$ such that $\phi(O_K)=O_L$ is equal to plus or minus an isomorphism of fields $K \cong L$.
\end{theorem}

\begin{proof}
From Corollary \ref{TheMatrixU}$(iii)$ we know that $U=(c_{ij})$ is an orthogonal matrix, where $c_{ij}:=\langle \sigma_i , \tau_j \circ \phi \rangle_{{\rm tr}_{K/\mathbb{Q}}}$ and $\{\sigma_1, \ldots, \sigma_n\}$ (resp. $\{\tau_1, \ldots, \tau_n\}$ ) is the set of embeddings of $K$ (resp. $L$). Moreover, because $K$ and $L$ are totally real,  $U\in {\rm M}_n(\mathbb{R})$ and thus $|c_{ij}|\leq 1$ for every $1\leq i,j\leq n$. On the other hand, thanks to Proposition \ref{SomeProp}(ii) we have a natural action of the absolute Galois  group $G_\Q:={\rm Gal}(\overline{\Q}/\mathbb{Q})$  on the entries of $U$. More specifically, if $\theta \in G_\Q$ then  $\theta(c_{ij})=c_{i'j'}$
where $\theta \circ  \sigma_i=\sigma_{i'}$ and $\theta \circ  \tau_j=\tau_{j'}$. In particular, every conjugate of an entry of $U$ is again an entry of $U$ and thus has absolute value  bounded by $1$, i.e.,  $|\theta(c_{ij})|\leq 1$ for every $\theta \in G_{\Q}$ and $1\leq i,j \leq n$.\\

\noindent The fact that the different ideals of $K$ and $L$ are square free implies that $d_s(K,L)=1$, thus each $c_{ij}$ must be an algebraic integer by Theorem \ref{RadDiscriminantCasimirAlgInteger}(i). From the above paragraph we deduce that each $c_{ij}$ is either $0$ or a real root of  unity, or equivalently $c_{ij}\in \{0,\pm 1\}$. Furthermore, $U$ being orthogonal implies that there is only one  $c_{ij}$ equal to $\pm 1$ on each row and on each column of $U$. If $i$ is the index such that $\langle \sigma_i,\tau_1 \circ  \phi \rangle_{\textrm{tr}_{K/\mathbb{Q}}} \neq 0$, then by Corollary \ref{FourierCoefficients} we have
\[\tau_1 \circ  \phi=\pm\sigma_i,\]
and thus $\mp \phi$ is multiplicative.
\end{proof}

\begin{corollary}\label{ElAuto}
Let $K$ be a totally real number field with square free different ideal. Then, \[{\rm Aut} \left((O_K,{\rm tr}_{K/\mathbb{Q}}) \right) \cong 
\Z/2\Z \times {\rm Aut}(K).
\]

In particular, if $n>2$ and $K$ is a $S_{n}$ number field then the automorphism group of the integral trace is ``trivial".

\end{corollary}

\begin{proof}
This follows from Theorem \ref{SqFDiff} by taking $K=L$.
\end{proof}

\begin{corollary}\label{AutTraceFundDisc}

Let $K$ be a real number field of degree at least $3$ and with square free discriminant. Then, \[{\rm Aut} \left((O_K,{\rm tr}_{K/\mathbb{Q}}) \right) \cong 
\Z/2\Z.
\]
\end{corollary}

\begin{proof}
Let $\widetilde{K}$ be the Galois closure of $K$, let $G$ be the Galois group of $\widetilde{K}/\Q$ and let $H$ be the subgroup of $G$ corresponding to $K$. Since $K$ has square free discriminant $G$ is the full symmetric group and $H$ is a one point stabilizer. In particular, $H$ is self-normalizing in $H$ and thus ${\rm Aut}(K)$ is the trivial group. Since square free discriminant implies square free different ideal, the result follows from Theorem \ref{ElAuto}.
\end{proof}

Since random lattices have ``trivial" automorphism group, see for instance \cite{Bier} and \cite{Ban}, this corollary is not completely unexpected. A derived question that comes to mind is whether or not maximal orders of real number fields with trivial automorphism group are random within the space of positive definite lattices.

\begin{remark}

Notice that Corollary \ref{AutTraceFundDisc} could be stated in a more general form: as long as $H$ is self normalizing in $G$, and $K$ has square free different ideal the same conclusion holds. This follows since for arbitrary $K$ we have that  $N_{G}(H)/H \cong {\rm Aut}(K)$.

\end{remark}

\section*{Acknowledgements}

We thank Professor Schulze-Pillot for his help in providing us with references about a result on random lattices. We also thank Professor Eva Bayer-Fluckiger for very useful comments and remarks on a previous version of this paper. G. Mantilla-Soler's work was supported in part by the Aalto Science Institute.

\noindent
{\footnotesize Carlos Rivera, Department of Mathematics, University of Washington,
Seattle, USA. ({\tt caariv@uw.edu})}

\noindent
{\footnotesize Guillermo Mantilla-Soler, Department of Mathematics, Universidad nacional de Colombia
Medell\'in, Colombia.  ({\tt gmantelia@gmail.com})}

\end{document}